
\documentclass[12pt,draftclsnofoot,onecolumn]{IEEEtran}
%
% If IEEEtran.cls has not been installed into the LaTeX system files,
% manually specify the path to it like:
% \documentclass[journal]{../sty/IEEEtran}
\usepackage{amsmath,graphicx}
\usepackage{algorithm}
\usepackage{algorithmic}
\usepackage{enumerate}
\usepackage{amssymb}
\usepackage{amsmath}
\usepackage{amsthm}
\usepackage{epstopdf}
\usepackage{subfig}
\usepackage{makecell}
\usepackage{multirow}
\usepackage{pifont,color}% http://ctan.org/pkg/pifont
\usepackage[dvipsnames]{xcolor}

\def\MatrixFont{\bf}
\def\VectorFont{\bf}

\newcommand{\mG}{{\MatrixFont G}}
\newcommand{\mH}{{\MatrixFont H}}
\newcommand{\mI}{{\MatrixFont I}}

\newcommand{\mN}{{\MatrixFont N}}

\newcommand{\mQ}{{\MatrixFont Q}}
\newcommand{\mR}{{\MatrixFont R}}
\newcommand{\mS}{{\MatrixFont S}}

\newcommand{\mV}{{\MatrixFont V}}
\newcommand{\mW}{{\MatrixFont W}}
\newcommand{\mX}{{\MatrixFont X}}
\newcommand{\mY}{{\MatrixFont Y}}
\newcommand{\mZ}{{\MatrixFont Z}}

\newcommand{\ve}{{\VectorFont e}}

\newcommand{\vh}{{\VectorFont h}}

\newcommand{\vn}{{\VectorFont n}}

\newcommand{\vq}{{\VectorFont q}}

\newcommand{\vs}{{\VectorFont s}}

\newcommand{\vu}{{\VectorFont u}}
\newcommand{\vv}{{\VectorFont v}}
\newcommand{\vw}{{\VectorFont w}}
\newcommand{\vx}{{\VectorFont x}}
\newcommand{\vy}{{\VectorFont y}}

\newtheorem{theorem}{Theorem}
\newtheorem{lemma}{Lemma}

\newtheorem{proposition}{Proposition}

\newtheorem{definition}{Definition}

% Example definitions.
% --------------------

   %shortcut for boldface cal H
        %Code matrix
       %'ss' is already defined, so let's use sss

%\newcommand\PP{\ensuremath{\bm{\Pi}}}

       % the original 'G' in BQP reformulation

       %Code matrix x (I kron f): for the full ML

%\newcommand\vb{\ensuremath{{\bm v}}}

\newcommand\tr{\ensuremath{{\rm Tr}}}

% Some very useful LaTeX packages include:
% (uncomment the ones you want to load)

% *** MISC UTILITY PACKAGES ***
%
%\usepackage{ifpdf}
% Heiko Oberdiek's ifpdf.sty is very useful if you need conditional
% compilation based on whether the output is pdf or dvi.
% usage:
% \ifpdf
%   % pdf code
% \else
%   % dvi code
% \fi
% The latest version of ifpdf.sty can be obtained from:
% http://www.ctan.org/pkg/ifpdf
% Also, note that IEEEtran.cls V1.7 and later provides a builtin
% \ifCLASSINFOpdf conditional that works the same way.
% When switching from latex to pdflatex and vice-versa, the compiler may
% have to be run twice to clear warning/error messages.

% *** CITATION PACKAGES ***
%
%\usepackage{cite}
% cite.sty was written by Donald Arseneau
% V1.6 and later of IEEEtran pre-defines the format of the cite.sty package
% \cite{} output to follow that of the IEEE. Loading the cite package will
% result in citation numbers being automatically sorted and properly
% "compressed/ranged". e.g., [1], [9], [2], [7], [5], [6] without using
% cite.sty will become [1], [2], [5]--[7], [9] using cite.sty. cite.sty's
% \cite will automatically add leading space, if needed. Use cite.sty's
% noadjust option (cite.sty V3.8 and later) if you want to turn this off
% such as if a citation ever needs to be enclosed in parenthesis.
% cite.sty is already installed on most LaTeX systems. Be sure and use
% version 5.0 (2009-03-20) and later if using hyperref.sty.
% The latest version can be obtained at:
% http://www.ctan.org/pkg/cite
% The documentation is contained in the cite.sty file itself.

% *** GRAPHICS RELATED PACKAGES ***
%
\ifCLASSINFOpdf
  % \usepackage[pdftex]{graphicx}
  % declare the path(s) where your graphic files are
  % \graphicspath{{../pdf/}{../jpeg/}}
  % and their extensions so you won't have to specify these with
  % every instance of \includegraphics
  % \DeclareGraphicsExtensions{.pdf,.jpeg,.png}
\else
  % or other class option (dvipsone, dvipdf, if not using dvips). graphicx
  % will default to the driver specified in the system graphics.cfg if no
  % driver is specified.
  % \usepackage[dvips]{graphicx}
  % declare the path(s) where your graphic files are
  % \graphicspath{{../eps/}}
  % and their extensions so you won't have to specify these with
  % every instance of \includegraphics
  % \DeclareGraphicsExtensions{.eps}
\fi
\hyphenation{op-tical net-works semi-conduc-tor}

\begin{document}
%
% paper title
% Titles are generally capitalized except for words such as a, an, and, as,
% at, but, by, for, in, nor, of, on, or, the, to and up, which are usually
% not capitalized unless they are the first or last word of the title.
% Linebreaks \\ can be used within to get better formatting as desired.
% Do not put math or special symbols in the title.
%\title{Beamforming Design for Integrated Sensing and Communication Systems via Branch and Bound}
\title{Efficient Global Algorithms for Transmit Beamforming Design in ISAC Systems}
%
%
% author names and IEEE memberships
% note positions of commas and nonbreaking spaces ( ~ ) LaTeX will not break
% a structure at a ~ so this keeps an author's name from being broken across
% two lines.
% use \thanks{} to gain access to the first footnote area
% a separate \thanks must be used for each paragraph as LaTeX2e's \thanks
% was not built to handle multiple paragraphs
%\textmd{}

\author{Jiageng Wu, Zhiguo Wang, Ya-Feng Liu, and Fan Liu
\thanks{J. Wu is with  the School of Mathematics, Jilin University, Changchun ~130012, China (e-mail: wujg22@mails.jlu.edu.cn). Z. Wang is with  the College of Mathematics, Sichuan
University, Chengdu, Sichuan 610064, China (e-mail: wangzhiguo@scu.edu.cn).  Y.-F. Liu is with the State Key Laboratory of Scientific and Engineering Computing, Institute of Computational Mathematics and Scientific/Engineering
Computing, Academy of Mathematics and Systems Science, Chinese Academy
of Sciences, Beijing 100190, China (e-mail: yafliu@lsec.cc.ac.cn). F. Liu is with the Department of Electrical and Electronic Engineering, Southern University of Science and Technology, Shenzhen 518055, China (e-mail: liuf6@sustech.edu.cn).  (Corresponding
author: Zhiguo Wang.) }% <-this % stops a space
%\thanks{ Zhiguo Wang is with College of Mathematics, Sichuan
%University, Chengdu, Sichuan 610064, China (e-mail: wangzhiguo@scu.edu.cn). This work was done in the Chinese University of Hong Kong, Shenzhen.}% <-this % stops a space
%\thanks{ Jiawei Zhang, Tsung-Hui Chang, and Zhi-Quan (Tom) Luo are with the Chinese University of Hong Kong, Shenzhen 518172, China
%and also with Shenzhen Research Institute of Big Data , Shenzhen, Guangdong Province 518172, China (e-mail: jiaweizhang2@link.cuhk.edu.cn;
%tsunghui.chang@ieee.org; luozq@cuhk.edu.cn). Corresponding
%author: Zhi-Quan (Tom) Luo. The work of Z.-Q. Luo is supported by the National Key R$\&$D Program of China (No. 2018YFB1800800), the leading talents of Guangdong province Program (No. 00201501), the National Natural Science Foundation of China (No. 61731018), the Development and Reform Commission of Shenzhen Municipality, and the Shenzhen Fundamental Research Fund (No. KQTD201503311441545).}
%\thanks{ Jian Li is with Department of Electrical and Computer Engineering, University of Florida (e-mail: li@dsp.ufl.edu).}
}

\maketitle

% As a general rule, do not put math, special symbols or citations
% in the abstract or keywords.
\begin{abstract}
In this paper,  we propose a multi-input multi-output transmit beamforming  optimization model for joint radar sensing and multi-user communications, where the design of the beamformers is formulated as an optimization problem whose objective is a weighted combination of the sum rate and the Cram\'{e}r-Rao bound, subject to the transmit power budget. Obtaining the global solution for the formulated nonconvex problem is a challenging task, since the sum-rate maximization problem itself   (even without considering the sensing metric) is known to be NP-hard. The main contributions of this paper are  threefold. Firstly, we derive an optimal closed-form   solution to the formulated problem in the single-user case and the multi-user case where the channel vectors of different users are orthogonal. Secondly, for the general multi-user case, we propose a novel branch and bound  (B\&B) algorithm  based on the McCormick envelope relaxation. The proposed algorithm is guaranteed to find the globally optimal solution to the formulated problem.  Thirdly, we  design a  graph neural network (GNN) based pruning policy to determine
irrelevant nodes that can be directly pruned in the proposed B\&B algorithm, thereby significantly reducing the number of
unnecessary enumerations  therein and improving its computational efficiency.  Simulation results show the efficiency of the proposed vanilla and GNN-based accelerated B\&B algorithms.
\end{abstract}

% Note that keywords are not normally used for peerreview papers.
\begin{IEEEkeywords}
Branch and bound algorithm, McCormick envelope relaxation, global optimality, transmit beamforming, sum-rate maximization.
\end{IEEEkeywords}

% For peer review papers, you can put extra information on the cover
% page as needed:
% \ifCLASSOPTIONpeerreview
% \begin{center} \bfseries EDICS Category: 3-BBND \end{center}
% \fi
%
% For peerreview papers, this IEEEtran command inserts a page break and
% creates the second title. It will be ignored for other modes.
\IEEEpeerreviewmaketitle

\vspace{-0.3cm}
\section{Introduction}
\vspace{-0.0cm}
\label{sec:intro}
Future wireless networks are anticipated to deliver enhanced communication services and support a range of emerging applications  related to both sensing and communications  such as smart manufacturing, environment monitoring, and remote
health-care \cite{tan2018exploiting,ma2020joint}.   Integrated sensing and communication (ISAC), in which  radar sensing and wireless communications are integrated to share a common radio spectrum, has gained significant attention and recognition from both academia and industry.  ISAC has been envisioned as a pivotal enabler to support various sensing and communication applications \cite{su2022secure,xiong2023fundamental,song2023intelligent}. In particular, ISAC has been listed as one of the six key usage scenarios of the future 6G system  in the new
Recommendation \cite{ITU-R-WP5D2023} for IMT-2030 (6G).

Unlike traditional systems, where communication and radar sensing are typically designed and deployed independently, ISAC aims to achieve dual  functionalities within a unified system through shared frequency, hardware, and joint signal processing design \cite{zhang2021overview}. There are three categories of the design methodology: radar-centric design \cite{mealey1963method}, communication-centric design \cite{zou2023energy}, and joint design \cite{hassanien2015dual}. All of these  design methodologies have shown that ISAC significantly enhances spectral efficiency and reduces implementation costs by sharing spectral resources and reusing expensive hardware platforms. In particular, compared  against radar-centric and communication-centric designs, the joint design methodology offers greater flexibility in the beamforming/waveform design, potentially providing a much better trade-off between communication and sensing.
Therefore, transmit beamforming designs in the category of joint design have gained growing interests recently \cite{liu2022integrated,liu2020joint}. This motivates us to focus on the joint beamforming design.

There are two formulations for multi-user beamforming designs for ISAC systems from different perspectives. From the perspective of users \cite{liu2021cramer}, the beamforming design problem is formulated as the
 Cram\'{e}r-Rao bound (CRB)-minimization problem  under  individual signal-to-interference-plus-noise ratio (SINR) constraints of all communication users \cite{liu2022joint}. These  SINR constraints guarantee a minimum level of communication quality of service  (QoS) for all  communication users.  It has been shown in  \cite{liu2022joint} that the proposed beamforming design  based on the CRB metric significantly outperforms  the corresponding counterparts based on beampattern matching \cite{liu2020joint} in terms of the sensing performance.
From the perspective of the system operator \cite{weeraddana2012weighted}, the beamforming design problem is often formulated as the sum-rate maximization problem under the total power
budget constraint at the transmitter \cite{shi2011iteratively,shen2018fractional}. It is worth noting that the sum rate is a more fundamental metric that characterizes the overall performance in multi-user communication scenarios. Motivated by this, this paper is interested in optimizing both communication performance, measured by the  sum rate of all communication users, and the target estimation performance, measured by the CRB for unbiased estimators.  In contrast to individual SINR constraints  of all communication users in \cite{liu2020joint}, our design incorporates sum-rate maximization (SRM) to ensure a superior network throughput performance.
%\subsection{Related Works}

Specifically, this paper considers the transmit beamforming design problem of optimizing  a
weighted combination of the sum rate of all communication
users and the CRB of the sensing target under the total transmit
power constraint. The paper proposes an efficient  global branch and bound (B\&B)
algorithm for solving the formulated
nonconvex problem to characterize the trade-off  between two fundamental metrics of communication and sensing, which are sum rate and CRB. The computation of   the global solution holds significant importance in the sense that the resultant
global solution serves as vital benchmarks for assessing the
performance of existing heuristic or local algorithms designed
for the same problem. Finally, this paper further proposes an accelerated B\&B algorithm
based on a  graph neural network (GNN) model inspired by the work \cite{shrestha2023optimal} to improve the computational efficiency of the proposed vanilla B\&B algorithm.
\vspace{-1mm}
\subsection{Related Works}
\textbf{Global solution for SRM.}  SRM plays a critical
role in wireless communication system design \cite{liu2024survey}. However,  the SRM
 problem is more difficult compared to the  counterpart in the fixed SINR target case. Notably, the SRM problem has been proven to be NP-hard \cite{chiang2007power,luo2008dynamic}. There are generally two popular methods for achieving the global solution to the SRM problem \cite{weeraddana2011weighted,matthiesen2020mixed}: the outer polyblock approximation (PA) algorithm \cite{qian2009mapel} and the B\&B algorithm \cite{tuy2005monotonic}. In general, the success of the PA algorithm depends on the property that the objective function of SRM is monotonically increasing in their SINRs \cite{liu2012achieving}. However, this monotonicity property does not hold for our interested problem as its objective function also includes a CRB term for radar sensing. Alternatively, the B\&B algorithm  uses a tree search strategy to implicitly enumerate all of its possible solutions \cite{tuy2005monotonic}.  The numerical efficiency and convergence speed of the B\&B algorithm heavily rely on the quality of upper and lower bounds for the optimal value \cite{liu2024survey,weeraddana2011weighted,lu2017efficient,lu2020enhanced,schobel2010theoretical}.

\textbf{Local solution for ISAC with SRM.} State-of-the-art iterative  local algorithms, such as weighted sum-minimum mean-square error (WMMSE) \cite{shi2011iteratively}, fractional programming (FP) \cite{shen2018fractional}, and minorization-maximization (MM) \cite{sun2016majorization}, primarily aim to find a stationary point for SRM. In ISAC scenarios involving both SRM and radar sensing, the work \cite{zhu2023information} combined WMMSE and  semidefinite relaxation (SDR) techniques, named WMMSE-SDR, to attain a Karush-Kuhn-Tucker (KKT) point. Additionally, the work \cite{luo2022joint} proposed an efficient alternating  optimization algorithm based on FP to jointly optimize the performance of the ISAC system. In particular, the above algorithm seeks to maximize the achievable sum rate of communication users while satisfying the beampattern similarity constraint for radar sensing. When accounting for the impact of spatially correlated channels in ISAC, the work \cite{wang2022noma} introduced a successive convex approximation algorithm for solving the weighted combination of SRM and the effective sensing power. It is  worth mentioning that all of  aforementioned algorithms  cannot be guaranteed to find the globally optimal solution for the  problem under consideration.

\textbf{Machine learning based accelerated B\&B algorithms.}
The B\&B algorithm, essentially   a ``smart"  enumerative approach to  globally  solve nonconvex optimization problems, exhibits inherent weaknesses in  its scalability
to solve large-scale problems. To  overcome this limitation, the work \cite{he2014learning} pioneered the integration of a machine learning (ML) model that involves  an offline training of a binary classifier utilizing diverse problem instances into the B\&B algorithm. Subsequently, the trained classifier predicts and bypasses nodes that do not contain optimal solutions, which are named as irrelevant codes during the B\&B algorithm's branching process, thereby resulting in substantial time savings. Building upon this foundation, the work \cite{shen2019lorm} introduced a pruning policy based on a multi-layer perceptron (MLP) specifically tailored for wireless networks. Notably, the recent work \cite{shrestha2023optimal} demonstrated that a GNN-based model can significantly accelerate the B\&B algorithm at the same time preserves its global optimality with high probability.

%This insight suggests the potential application of GNN to expedite the proposed B\&B algorithm.
%
%
% In this paper, we consider the problem of optimizing the weighted combination of the sum-rate of all communication users and the CRB of the sensing target under the total transmit power constraint. Our goal is designing an  efficient global algorithm based on B\&B framework for solving the formulated nonconvex problem. As we know,  the computation of a global solution holds significant importance, as the resultant global solution  serves as vital benchmarks for assessing the performance of existing heuristic or local algorithms designed for the same problem.  Additionally, drawing inspiration from the work by \cite{shrestha2023optimal}, we propose an accelerated B\&B algorithm based on a GNN model to conserve computational resources.
%
%
%
%A condensed version of the paper \cite{wang2023globally} was submitted to ICASSP 2024, focusing solely on the B\&B design for the general multi-user case. The journal version, on the other hand, offers a comprehensive presentation, including theoretical analyses for both the single-user case and the orthogonal multi-user case, details on the ML-based B\&B algorithm, and an extended set of simulations

\vspace{-2mm}
\subsection{Our Contributions}
This paper proposes the first tailored efficient global algorithm for solving the joint SRM and CRB-minimization problem, which characterizes a trade-off between two fundamental metrics of communication and sensing. The proposed approach is in sharp contrast to existing works, which either primarily concentrate on the SRM problem or resort to approximation (or local) optimization algorithms for solving transmit beamforming design problems in ISAC systems. The main contributions of this paper are summarized as follows:
\begin{itemize}
  \item  We formulate a joint optimization problem, whose objective is a weighted combination of the sum rate of all communication users and the CRB of the  extended target. The formulated problem strikes an important trade-off between two fundamental performance metrics of communication and sensing in ISAC systems. There are significant technical challenges in globally solving the formulated problem, as the considered problem without the CRB term reduces to the NP-hard SRM problem.
  \item We derive the optimal closed-form  solution  to the formulated problem in two special cases, which are the single-user case and the  multi-user case where the channel vectors of different users are orthogonal. In the latter case,
we prove that the  all multi-user interference terms are minimized to be zero  at the optimal solution, which further implies that the original nonconvex problem can be transformed into an equivalent convex problem.
  \item In the general multi-user case, we propose a novel B\&B algorithm that leverages the McCormick envelope relaxation. We show that the proposed B\&B algorithm is guaranteed to find the globally optimal  solution   of the formulated problem. Global optimization algorithms are vital in accessing the fundamental trade-off between communication and sensing in ISAC systems,
      serving as benchmarks for  evaluating the performance of existing
heuristic or local algorithms designed for the same problem.
  \item In order to improve its computational efficiency, we  design a  GNN-based binary classifier in the proposed B\&B algorithm. The classifier  is helpful in determining irrelevant nodes that can be directly pruned in the B\&B algorithm,
      thereby significantly reducing the number of unnecessary enumerations and accelerating the B\&B process.
\end{itemize}
Simulation results verify the derived theoretical results  including  closed-form solutions in two special cases and global optimality of the proposed B\&B algorithm, and show that the GNN-based accelerated B\&B algorithm achieves an order-of-magnitude speedup compared to the  vanilla B\&B algorithm.

The prior work \cite{wang2023globally} solely focused on the design of the B\&B algorithm for the general multi-user case. The present paper is a significant extension of \cite{wang2023globally}.  More specifically, we provide theoretical analysis for both the single-user case and the orthogonal multi-user case and propose an GNN-based accelerated B\&B algorithm. These results are completely new compared with our prior work. Finally, we conduct extensive simulation results to verify  all of derived theoretical results and compare our proposed algorithm with state-of-the-art benchmarks.
\vspace{-2mm}
\subsection{Organization and Notations}
The organization of this paper is as follows. In Section \ref{sec_system}, the joint
 beamforming design problem  is formulated. In Section \ref{sec_property},
  a few important properties of the optimal solution for the joint beamforming design problem are derived. Based on these properties, a global B\&B algorithm for solving the problem is proposed in Section \ref{sec_BB}. Section \ref{sec_ml_bb} proposes an GNN-based accelerated B\&B algorithm.
Simulation results are presented in Section \ref{sec_num} and the paper is finally concluded
in Section \ref{sec_con}.

\emph{Notations:} Vectors and matrices are represented by bold lowercase letters (i.e., $\vw$) and bold uppercase letters (i.e., $\mW$), respectively.  $\tr(\cdot)$, $(\cdot)^\top$,  $(\cdot)^H$, and $(\cdot)^{-1}$ denote the trace operator, the transpose operator,  the Hermitian transpose operator, and the inverse operator, respectively. We use  $\mI_K$ to denote the
$K\times K$ identity matrix, $\|\vw\|$ to denote the Euclidean norm of vector $\vw$, and $[K]$ to denote the set $\{1,2,\ldots,K\}$. $\mX\succ 0$ ($\mX\succeq 0$) denotes that $\mX$ is a positive (semi)definite matrix. Finally, we use $\textmd{Re}(\cdot)$ and $\textmd{Im}(\cdot)$ to denote
the  real and imaginary parts of a complex value, respectively.
\section{System Model and Problem Formulation}\label{sec_system}
\subsection{System Model}
Consider a   multi-input multi-output (MIMO) ISAC base station (BS) equipped with $N_t$ transmit
antennas and $N_r$ receive antennas, which serves $K$
downlink single-antenna users while detecting an extended target.  Without loss of generality, we assume  $N_t<N_r$ in this paper in order to avoid the information loss of the sensed target.

Let $\mX\in \mathbb{C}^{N_t\times L}$ be the transmitted baseband signal  with $L>N_t$ being the length of the radar pulse/communication
frame. The matrix $\mX$ is  the sum of linear precoded radar waveforms and communication symbols, given by
\vspace{-1mm}
\begin{align}\label{eqn:X}
\mX=\sum_{k\in[K]}\vw_k\vs_k^H+\mW_A\mS_A^H,
\end{align}
where $\vs_k\in\mathbb{C}^{L \times 1}$ is the data symbol for the $k$-th communication user and $\mS_A\in\mathbb{C}^{L\times N_t}$ is the sensing signal, which are precoded by
the communication beamformer $\vw_k\in\mathbb{C}^{N_t\times 1}$ and the auxiliary beamforming matrix $\mW_A\in\mathbb{C}^{N_t\times N_t}$.
Assume that the data streams $\tilde{\mS}=[\vs_1, \vs_2, \ldots,\vs_K,\mS_A]^H$ are asymptotically orthogonal \cite{liu2021cramer} to each other for sufficiently large $L$, i.e., $\frac{1}{L}\tilde{\mS}\tilde{\mS}^H\approx\mI_{K+N_t}$.

%Below, we use the sum-rate metric for multi-user communication and introduce the CRB as the measure of the target estimation performance.
\subsection{Problem Formulation}
\textbf{Communication metric.} For multi-user communications, by transmitting $\mX$ to $K$ users, the received signal $\vy_k$ of user $k$ is given as
\begin{align}\label{eqn:rsm}
\vy_k = \vh_k^H\mX+\vn_C,
\end{align}
where $\vh_k\in \mathbb{C}^{ N_t\times 1}$ is the communication  channel between the BS and user $k$, which is assumed to be known to the BS; $\vn_C$ is an additive white Gaussian noise (AWGN) vector with the variance of
each entry being $\sigma_C^2$.
 Then the SINR at the $k$-th communication user can be expressed as
 \begin{align}\label{eqn:SINR}
  \tilde{\gamma}_k=\frac{\left|\vh_{k}^{H} \vw_{k}\right|^{2}}{\sum_{i=1, i \neq k}^{K}\left|\vh_{k}^{H} \vw_{i}\right|^{2}+\left\|\vh_{k}^{H} \mW_A\right\|^{2}+\sigma_{C}^{2}}.
 \end{align}
 One of the most important criteria for multi-user beamforming is the overall system throughput \cite{weeraddana2012weighted}
\begin{align}\label{sum_rate}
\sum_{k\in[K]}\log(1+\tilde{\gamma}_k).
\end{align}

\textbf{Sensing metric.} By transmitting $\mX$ to sense the target, the reflected echo signal at the BS is given by
\vspace{-1mm}
\begin{align}\label{eqn: YR}
\mY_s=\mG\mX+\mN_s,
\end{align}
where $\mG\in \mathbb{C}^{N_r\times N_t}$ denotes the extended target response matrix and $\mN_s\in \mathbb{C}^{N_r\times L}$ is an AWGN matrix with the variance of each entry being  $\sigma_s^2$.
For the purpose of target sensing, we focus on estimating the response
matrix $\mG$. The CRB of estimating the response matrix is given by \cite{liu2021cramer}
\vspace{-1mm}
\begin{align}\label{eqn:CRB}
\textmd{CRB}(\mG) = \frac{\sigma_s^2N_r}{L}\tr(\mR_X^{-1}),
\end{align}
where
\vspace{-2mm}
\begin{align}\label{eqn:RX}
\mR_X =  \frac{1}{L}\mX\mX^H= \sum_{k\in[K]}\vw_k\vw_k^H+\mW_A\mW_A^H
\end{align}
is the sample covariance matrix of $\mX$ since the orthogonal data stream assumption.

Based on the sum-rate expression in \eqref{sum_rate} and the CRB  expression in \eqref{eqn:CRB}, the joint communication and sensing beamforming design problem can be formulated as
\begin{subequations}\label{beamform}
\begin{align}
\label{beamform_obj}&\min_{\substack{\{\vw_k\}_{k=1}^K,\mW_A}}~ -\sum_{k\in[K]}\log(1+\tilde{\gamma}_k)+ \rho\tr\left(\mR_X^{-1}\right) \\
\label{beamform_cons2} &\qquad~\textmd{s.t.} ~ ~~~~ \tr\left(\sum_{k\in[K]}\vw_k\vw_k^H+\mW_A\mW_A^H\right)\leq P_T,
\end{align}
\end{subequations}
where $\rho$ is a  positive parameter to trade-off the sum rate and  the CRB  and $P_T$ is the total transmit power budget
of the BS.

The problem \eqref{beamform} may strike a scalable trade-off between the communication and sensing performance by choosing different values of parameter $\rho.$
 This goal, however, cannot be achieved without the global solution of problem \eqref{beamform}.
In the rest part of this paper, we focus on designing global algorithms for solving this problem. It is worth highlighting that, although we formulate the problem as in \eqref{beamform}, the proposed algorithms in this paper can also be used to solve the other formulations of the joint communication and sensing beamforming design problem such as the SRM problem subject to the sensing CRB constraint on the target and the total power budget constraint of the BS.

\section{Theoretical Properties of the Optimal Solution to Problem  \eqref{beamform}}\label{sec_property}
In this section, we derive some important properties of the optimal solution for the joint beamforming design problem \eqref{beamform}. These properties play a central role in the development of an efficient global optimization algorithm  for solving problem \eqref{beamform} in Section IV. More specifically, we  propose a relaxation of problem \eqref{beamform} and show its tightness in Section III-A; then we derive the optimal solution of problem \eqref{beamform} in two special cases  which are the single-user case (i.e., $K=1$)  and the multi-user case where the channel vectors of different users are orthogonal.
\subsection{A Relaxation of Problem \eqref{beamform} and Its Tightness}
Introducing some auxiliary variables $\left\{\Gamma_k\right\}_{k=1}^K$,  we can reformulate problem \eqref{beamform} as
 \begin{subequations}\label{beamform_multi}
\begin{align}
\label{beamform_multi_obj}&\min_{\substack{\{\Gamma_k\}_{k=1}^K,\\
\{\vw_k\}_{k=1}^K,\mW_A}}~ -\sum_{k\in[K]}\log(1+\Gamma_k)+ \rho \tr\left(\mR_X^{-1}\right) \\
\label{beamform_multi_cons2} &\qquad\textmd{s.t.}  \qquad~~ \tr\left(\sum_{k\in[K]}\vw_k\vw_k^H+\mW_A\mW_A^H\right)\leq P_T,\\
 \label{beamform_multi_cons3}&\qquad\qquad\qquad\tilde{\gamma}_k\geq \Gamma_k,~ k\in [K],
\end{align}
\end{subequations}
where $\{\tilde{\gamma}_k\}_{k=1}^K$ and $\mR_X$  are given in  (\ref{eqn:SINR}) and (\ref{eqn:RX}), respectively.
Let
$
\mW_k=\vw_k\vw_k^H$ for all $k\in[K]$ and $\mW_{K+1}=\mW_A\mW_A^H
$. Then we get $\mW_k\succeq 0$ and
 $\textmd{rank}(\mW_k)=1$ for all $k\in [K].$
According to the  definition of $\tilde{\gamma}_k$ in \eqref{eqn:SINR}, the $k$-th SINR constraint \eqref{beamform_multi_cons3} can be rewritten as
\begin{align}\label{eqn:QW}
\tr(\mQ_k\mW_k)-\Gamma_k\sum_{i\neq k}^{K+1}\tr(\mQ_k\mW_i)\geq \Gamma_k\sigma_C^2,~ k\in [K],
\end{align}
where $\mQ_k=\vh_k\vh_k^H$ for all $k\in[K]$.  Then problem \eqref{beamform_multi} can be rewritten as
 \begin{subequations}\label{beamform_matrix}
\begin{align}
\label{beamform_matrix_obj}&\min_{\substack{\{\Gamma_k\}_{k=1}^{K},\{\mW_k\}_{k=1}^{K+1}}} -\sum_{k\in[K]}\log(1+\Gamma_k)+ \rho \tr\left(\mR_X^{-1}\right) \\
  \label{beamform_matrix_cons12} &\qquad~~~\textmd{s.t.} \qquad~~~ ~\sum_{k\in[K+1]}\tr\left(\mW_k\right)\leq P_T,~ \eqref{eqn:QW},\\
 \label{beamform_matrix_cons22}&\qquad\qquad\qquad\qquad~~\mW_k\succeq 0, ~k\in [K+1],\\
  \label{beamform_matrix_cons4}&\qquad\qquad\qquad\qquad~~\textmd{rank}(\mW_k)=1,  ~k\in [K].
\end{align}
\end{subequations}

By dropping all  rank-one constraints in \eqref{beamform_matrix_cons4}, problem \eqref{beamform_matrix} is relaxed into the following optimization problem
 \begin{subequations}\label{beamform_matrix1}
\begin{align}
\label{beamform_matrix_obj1}&\min_{\substack{\{\Gamma_k\}_{k=1}^{K},\{\mW_k\}_{k=1}^{K+1}}} -\sum_{k\in[K]}\log(1+\Gamma_k)+ \rho \tr\left(\mR_X^{-1}\right) \\
\label{beamform_matrix_obj12} &\qquad~~~\textmd{s.t.} \qquad~~~ ~\sum_{k\in[K+1]}\tr\left(\mW_k\right)\leq P_T,~ \eqref{eqn:QW},\\
\label{beamform_matrix_obj3} &\qquad\qquad\qquad\qquad~~\mW_k\succeq 0,  ~k\in [K+1].
\end{align}
\end{subequations}
Note that the above optimization problem \eqref{beamform_matrix1} is \emph{not} convex since there  exist many bilinear terms like $\Gamma_k\tr(\mQ_k\mW_i)$ in \eqref{eqn:QW}.
However, if $\Gamma_k$ for all $k\in [K]$ are fixed and given, then problem \eqref{beamform_matrix1} is equivalent to a convex problem considered in \cite[Eq. (36)]{liu2021cramer}.  This observation is important to the development of the global algorithm for solving problem \eqref{beamform} in the next section.

Now let us assume that an optimal solution of problem \eqref{beamform_matrix1} has been obtained and consider how to construct a rank-one solution directly from it.
From the definition of $\mR_X$ in \eqref{eqn:RX}, problem \eqref{beamform_matrix1} can be rewritten as
\vspace{-2mm}
 \begin{subequations}\label{beamform_multi1}
\begin{align}
\label{beamform_multi_obj1}&\min_{\substack{\mR_X,\{\Gamma_k\}_{k=1}^K,\\
\{\mW_k\}_{k=1}^K}} -\sum_{k\in[K]}\log(1+\Gamma_k)+ \rho \tr\left(\mR_X^{-1}\right) \\
\label{beamform_multi_cons12} &\qquad~\textmd{s.t.} ~~ \tr(\mR_X)\leq P_T,\mW_k\succeq 0,~ k\in [K],
\\
\label{beamform_multi_cons14}&\qquad~~~~ ~ ~ ~\mR_X\succeq \sum_{k\in[K]}\mW_k,\\
\label{beamform_multi_cons13} &\qquad~~~~ ~  ~~\tr(\mQ_k\mW_k)-\Gamma_k\tr(\mQ_k(\mR_X-\mW_k))\geq \Gamma_k\sigma_C^2,~ k\in [K].
\end{align}
\end{subequations}
 There are a lot of works  (e.g., \cite{liu2020joint,ma2017unraveling}) that study the tightness of SDRs in the context of beamformer design.  Following a similar argument in \cite{liu2020joint}, we can show that the relaxation in (\ref{beamform_multi1}) is tight in the sense that it has a rank-one solution.
\begin{proposition}\label{theorem_rank}
Given an optimal solution $\bar{\mR}_X$, $\{\bar{\Gamma}_k\}_{k=1}^K$, $\{\bar{\mW}_k\}_{k=1}^K$ of problem \eqref{beamform_multi1}, the following
$\tilde{\mR}_X$, $\{\tilde{\Gamma}_k\}_{k=1}^K$, $\{\tilde{\mW}_k\}_{k=1}^K$ is also its optimal solution:
\begin{align}\label{rrx}
\tilde{\mR}_X= \bar{\mR}_X, ~\tilde{\Gamma}_k = \bar{\Gamma}_k, ~ \tilde{\mW}_k=\frac{\bar{\mW}_k\mQ_k\bar{\mW}_k^H}{\tr(\mQ_k\bar{\mW}_k)},~k\in[K].
\end{align}
Moreover,  $\textmd{rank}(\tilde{\mW}_k)=1$  for all $k\in [K]$.
\end{proposition}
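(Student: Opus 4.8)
The plan is to verify that the candidate point $(\tilde{\mR}_X, \{\tilde{\Gamma}_k\}_{k=1}^K, \{\tilde{\mW}_k\}_{k=1}^K)$ is feasible for \eqref{beamform_multi1} and attains the same objective value as the given optimal solution, and then to observe that the rank-one property is built into the construction itself. Because $\tilde{\mR}_X=\bar{\mR}_X$, $\tilde{\Gamma}_k=\bar{\Gamma}_k$, and the objective \eqref{beamform_multi_obj1} depends only on $\mR_X$ and $\{\Gamma_k\}$, the objective value is preserved automatically; for the same reason the power budget \eqref{beamform_multi_cons12} and the invertibility of $\mR_X$ needed to keep $\tr(\mR_X^{-1})$ finite are inherited from the barred solution. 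It therefore remains to check three things: positive semidefiniteness of $\tilde{\mW}_k$, the SINR constraint \eqref{beamform_multi_cons13}, and the coupling constraint \eqref{beamform_multi_cons14}.

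First I would rewrite the construction, using $\mQ_k=\vh_k\vh_k^H$ and $\bar{\mW}_k=\bar{\mW}_k^H$, as $\tilde{\mW}_k=\frac{(\bar{\mW}_k\vh_k)(\bar{\mW}_k\vh_k)^H}{\vh_k^H\bar{\mW}_k\vh_k}$. This exhibits $\tilde{\mW}_k$ as a nonnegative scalar times a rank-one outer product, which simultaneously establishes $\tilde{\mW}_k\succeq 0$ and $\rank(\tilde{\mW}_k)=1$, settling the PSD requirement and the final rank claim at once. The denominator $\vh_k^H\bar{\mW}_k\vh_k=\tr(\mQ_k\bar{\mW}_k)$ is strictly positive whenever $\bar{\Gamma}_k>0$, since \eqref{beamform_multi_cons13} forces $\tr(\mQ_k\bar{\mW}_k)\geq \bar{\Gamma}_k\sigma_C^2>0$; the degenerate case $\bar{\Gamma}_k=0$ contributes nothing to the objective and can be handled separately.

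Next I would show the SINR constraint is preserved \emph{exactly}. Using the cyclic property of the trace together with the scalar identity $\mQ_k\bar{\mW}_k\mQ_k=\tr(\mQ_k\bar{\mW}_k)\,\mQ_k$, a direct computation gives
\begin{align}
\tr(\mQ_k\tilde{\mW}_k)=\frac{\bigl(\tr(\mQ_k\bar{\mW}_k)\bigr)^2}{\tr(\mQ_k\bar{\mW}_k)}=\tr(\mQ_k\bar{\mW}_k).
\end{align}
Since $\tilde{\mR}_X=\bar{\mR}_X$, both the signal term $\tr(\mQ_k\tilde{\mW}_k)$ and the interference term $\tr(\mQ_k(\tilde{\mR}_X-\tilde{\mW}_k))$ equal their barred counterparts, so the left-hand side of \eqref{beamform_multi_cons13} is literally unchanged and the constraint continues to hold.

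The main technical step is the coupling constraint \eqref{beamform_multi_cons14}, which is the only place where genuine work is needed. I would prove the stronger pointwise dominance $\bar{\mW}_k\succeq\tilde{\mW}_k$, from which $\sum_{k\in[K]}\tilde{\mW}_k\preceq\sum_{k\in[K]}\bar{\mW}_k\preceq\bar{\mR}_X=\tilde{\mR}_X$ follows by monotonicity of the positive semidefinite ordering. To establish the dominance I would factor $\bar{\mW}_k=\bar{\mW}_k^{1/2}\bar{\mW}_k^{1/2}$ and write
\begin{align}
\bar{\mW}_k-\tilde{\mW}_k=\bar{\mW}_k^{1/2}\left(\mI-\frac{\vu_k\vu_k^H}{\vu_k^H\vu_k}\right)\bar{\mW}_k^{1/2},\qquad \vu_k:=\bar{\mW}_k^{1/2}\vh_k,
\end{align}
and recognize the inner factor as an orthogonal projection matrix, hence positive semidefinite, so the whole expression is positive semidefinite by congruence. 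This is the crux; everything else is bookkeeping. Combining feasibility with the unchanged objective value establishes optimality, and the rank count from the construction completes the proof.
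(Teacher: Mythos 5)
Your proof is correct and follows essentially the same route as the paper's: reduce everything to feasibility (since the objective depends only on the unchanged $\mR_X$ and $\{\Gamma_k\}$), show $\tr(\mQ_k\tilde{\mW}_k)=\tr(\mQ_k\bar{\mW}_k)$ so the SINR constraint is preserved verbatim, and establish $\bar{\mW}_k\succeq\tilde{\mW}_k$ so that $\tilde{\mR}_X\succeq\sum_{k}\tilde{\mW}_k$ follows. The only difference is cosmetic: you certify $\bar{\mW}_k-\tilde{\mW}_k\succeq 0$ via the square-root/projection congruence $\bar{\mW}_k^{1/2}\bigl(\mI-\vu_k\vu_k^H/(\vu_k^H\vu_k)\bigr)\bar{\mW}_k^{1/2}$, while the paper invokes the Cauchy--Schwarz inequality $(\vh_k^H\bar{\mW}_k\vh_k)(\vv^H\bar{\mW}_k\vv)\geq|\vv^H\bar{\mW}_k\vh_k|^2$ -- these are the same estimate -- and your explicit handling of the PSD/rank-one claims and of the degenerate denominator when $\bar{\Gamma}_k=0$ is, if anything, slightly more careful than the paper's.
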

\begin{proof}
 See the supplementary material.
\end{proof}
According to  Proposition \ref{theorem_rank}, we can  find an optimal solution $\tilde{\mR}_X, \{\tilde{\Gamma}_k\}_{k=1}^K, \{\tilde{\mW}_k\}_{k=1}^K$   with all of $\tilde{\mW}_k$  being rank-one if an optimal solution $\bar{\mR}_X, \{\bar{\Gamma}_k\}_{k=1}^K,\{\bar{\mW}_k\}_{k=1}^K$  of problem \eqref{beamform_multi1} is obtained.
In addition, the optimal beamformer $\{\vw_k\}_{k=1}^K$ and $\mW_A$ for the original problem \eqref{beamform} is straightforwardly expressed as
\vspace{-3mm}
\begin{align*}
\vw_k&=\left(\vh_k^H\bar{\mW}_k\vh_k\right)^{-1/2}\bar{\mW}_k\vh_k, ~k\in [K],\\
\mW_A\mW_A^H&=\tilde{\mR}_X-\sum_{k\in[K]}\tilde{\mW}_k.
\end{align*}
Therefore, we only need to  focus on solving problem \eqref{beamform_multi1} in order to solve  problems \eqref{beamform} and \eqref{beamform_multi}.
\subsection{Closed-Form Solutions in Two Special Cases}
In this subsection, we derive an optimal closed-form solution to problem (\ref{beamform_multi1}) when there is only a single communication user or the channel vectors
of different users are orthogonal. We consider these two cases separately.
\subsubsection{Single-User Case}
In this part, we consider the case where $K=1$. Let
$\mQ_1=\vh_1\vh_1^H$ and $\mW_1=\vw_1\vw_1^H$. Then the optimization
problem \eqref{beamform_multi1} can be recast as
\begin{subequations}\label{single_user0}
\begin{align}
&\min_{\substack{\mW_1,\mR_X,\Gamma_1}}~ -\log\left(1+\Gamma_1\right)+ \rho \tr(\mR_X^{-1}) \\
\label{single_user0_1} &\qquad\textmd{s.t.}  ~~~~ \tr(\mR_X)\leq P_T,~ \mR_X \succeq \mW_1 \succeq 0,\\
\label{single_user0_2} &\qquad\qquad~~
 \tr(\mQ_1\mW_1)-\Gamma_1\tr(\mQ_1(\mR_X-\mW_1))\geq \Gamma_1\sigma_C^2.
\end{align}
\end{subequations}
Although problem \eqref{single_user0}   seems to be a nonconvex problem, we can derive its  optimal   closed-form solution   as in the following theorem.
\begin{theorem} \label{theorem_closed}
 The optimal solution of  problem \eqref{single_user0} is
 \begin{align}\label{WRX}
 \mW_1 =\frac{\Gamma_1\sigma_C^2\vh_1\vh_1^H}{\|\vh_1\|^4},
 ~\mR_X = \sum_{i\in[N_t]}  \lambda_{i}\vu_i\vu_i^H,
 \end{align}
 where
 \begin{align*}
 \lambda_{1}=\frac{\Gamma_1\sigma_C^2}{\|\vh_1\|^2},~ \lambda_{i} = \frac{P_T\|\vh_1\|^2-\Gamma_1\sigma_C^2}{\|\vh_1\|^2(N_t-1)}, ~i=2,\ldots,N_t,
 \end{align*}
 $\vu_1 = \vh_1/\|\vh_1\|$, and $\{\vu_i\}_{i=2}^{N_t}$ forms an orthogonal basis of the null space of $\vu_1$.
 Additionally, the optimal $\Gamma_1$  satisfies the following univariate equation:
 \begin{align}\label{gamma_equation}
\frac{\|\vh_1\|^2}{1+\Gamma_1}  = \rho\sigma_C^2\left(
\frac{(\|\vh_1\|^2(N_t-1))^2}{(P_T\|\vh_1\|^2-\Gamma_1\sigma_C^2)^2}-
\frac{\|\vh_1\|^4}{\Gamma_1^2\sigma_C^4}\right).
\end{align}
 \end{theorem}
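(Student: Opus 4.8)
The plan is to solve \eqref{single_user0} by a two-stage reduction: first fix the scalar $\Gamma_1$ and minimize over the matrix variables $(\mW_1,\mR_X)$, collapsing the problem to a one-dimensional problem in a single power-allocation scalar, and then optimize over $\Gamma_1$. I would start by eliminating $\mW_1$. Since $\mW_1$ appears only in \eqref{single_user0_2}, through the term $\tr(\mQ_1\mW_1)=\vh_1^H\mW_1\vh_1$ with a positive coefficient, and in the ordering $0\preceq\mW_1\preceq\mR_X$, for any fixed feasible $\mR_X$ the best $\mW_1$ maximizes $\vh_1^H\mW_1\vh_1$ over $0\preceq\mW_1\preceq\mR_X$, whose maximum value is $\vh_1^H\mR_X\vh_1$. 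Substituting collapses \eqref{single_user0_2} into $\vh_1^H\mR_X\vh_1\ge\Gamma_1\sigma_C^2$. Writing $\vu_1=\vh_1/\|\vh_1\|$ and $t:=\vu_1^H\mR_X\vu_1$, this reads $t\ge t_{\min}:=\Gamma_1\sigma_C^2/\|\vh_1\|^2$, so the remaining inner problem is to minimize $\tr(\mR_X^{-1})$ subject to $\tr(\mR_X)\le P_T$ and $t\ge t_{\min}$.

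The key structural step is to pin down $\mR_X$ by a symmetrization argument. For any feasible $\mR_X$, conjugating by $\mU=\mathrm{diag}(1,\mV)$ in the basis $\{\vu_i\}$ (with $\mV$ unitary on $\vu_1^\perp$) preserves both $\tr(\mR_X)$ and $t=\vu_1^H\mR_X\vu_1$, hence preserves feasibility, while leaving $\tr(\mR_X^{-1})$ unchanged. Averaging $\mU\mR_X\mU^H$ over the Haar measure and invoking the convexity of $\mX\mapsto\tr(\mX^{-1})$ on the positive-definite cone (Jensen), the averaged matrix $\bar{\mR}_X=t\,\vu_1\vu_1^H+\tfrac{\tr(\mR_X)-t}{N_t-1}(\mI-\vu_1\vu_1^H)$ is feasible with no larger objective. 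Hence the optimal $\mR_X$ is diagonal in $\{\vu_i\}$, with its $\vu_1$-eigenvalue equal to $t$ and the remaining $N_t-1$ eigenvalues equal; using full power $\tr(\mR_X)=P_T$ (which only decreases $\tr(\mR_X^{-1})$), the objective becomes $g(t):=\tfrac1t+\tfrac{(N_t-1)^2}{P_T-t}$ over $t\in[t_{\min},P_T)$. As $g$ is convex with unconstrained minimizer $t=P_T/N_t$, the inner optimum is $t^\star(\Gamma_1)=\max\{t_{\min},P_T/N_t\}$.

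Finally I would optimize the outer objective $F(\Gamma_1)=-\log(1+\Gamma_1)+\rho\,g(t^\star(\Gamma_1))$. On the inactive range $t_{\min}\le P_T/N_t$ one computes $g\equiv N_t^2/P_T$, so $F$ is strictly decreasing and the minimizer must lie in the active range where $t^\star=t_{\min}$. On that range $t_{\min}$ is affine and increasing in $\Gamma_1$, so $g(t_{\min}(\cdot))$ is convex and $F$ is convex; moreover $F'=-1/(1+\Gamma_1)<0$ at the boundary $t_{\min}=P_T/N_t$ (since $g'(P_T/N_t)=0$), while $F\to+\infty$ as $t_{\min}\uparrow P_T$. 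Thus $F$ has a unique interior stationary point, which is the global minimizer, and the SINR constraint is active there. This yields $t=\lambda_1=\Gamma_1\sigma_C^2/\|\vh_1\|^2$, the rank-one $\mW_1=\tfrac{\Gamma_1\sigma_C^2}{\|\vh_1\|^4}\vh_1\vh_1^H$ (which attains the maximum in the first step and satisfies $\mW_1\preceq\mR_X$), the remaining eigenvalues $\lambda_i=\tfrac{P_T\|\vh_1\|^2-\Gamma_1\sigma_C^2}{\|\vh_1\|^2(N_t-1)}$ as in \eqref{WRX}, and the stationarity condition $F'(\Gamma_1)=0$, which rearranges exactly to \eqref{gamma_equation}.

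The main obstacle is the structural reduction of $\mR_X$ in the second step, namely proving that the optimal covariance is diagonal in the $\{\vu_i\}$ basis with equal eigenvalues off the $\vu_1$ direction. The unitary-averaging argument combined with convexity of $\tr(\cdot^{-1})$ handles this cleanly and avoids a direct, cumbersome KKT analysis; once it is in place, the remaining arguments reduce to elementary one-variable convex calculus.
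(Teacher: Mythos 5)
Your proposal is correct, and it reaches the paper's formulas by a genuinely different route. The paper's proof shares your first step in spirit (it forces the interference term to zero by constructing $\mW_1=(\vu_1^H\mR_X\vu_1)\vu_1\vu_1^H$, whereas you eliminate $\mW_1$ by monotonicity, collapsing \eqref{single_user0_2} to $\vh_1^H\mR_X\vh_1\geq\Gamma_1\sigma_C^2$), but then the two arguments diverge. The paper proves only that $\vu_1$ is an eigenvector of the optimal $\mR_X$ (via $\mW_2\vu_1=0$), parametrizes $\mR_X$ by its eigendecomposition, and solves the resulting joint convex program in $(\{\lambda_i\},\Gamma_1)$ by a full KKT analysis: positivity of the multipliers $\mu$ and $\omega$ forces both the power and SINR constraints to be active, the stationarity conditions $\rho\lambda_i^{-2}=\mu$ give the equal eigenvalues for $i\geq 2$, and combining $\omega=\rho(\lambda_i^{-2}-\lambda_1^{-2})$ with $\tfrac{1}{1+\Gamma_1}=\tfrac{\omega\sigma_C^2}{\|\vh_1\|^2}$ yields \eqref{gamma_equation}. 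You instead obtain the entire matrix structure (eigendirection $\vu_1$ plus isotropic remainder) in one stroke via Haar averaging over unitaries fixing $\vu_1$ together with Jensen's inequality for the convex map $\mX\mapsto\tr(\mX^{-1})$, and you replace the KKT system by nested scalar convex calculus: the inner problem gives $t^\star=\max\{t_{\min},P_T/N_t\}$, and the outer analysis of $F(\Gamma_1)$ shows the SINR constraint must bind and that \eqref{gamma_equation} is exactly $F'=0$. What each approach buys: the paper's multiplier analysis transfers almost verbatim to the orthogonal multi-user case (Lemma \ref{multi_lemma} and problem \eqref{multi_user0}), which is why the authors structure it that way; your symmetrization argument is multiplier-free, makes the equal-eigenvalue structure transparent rather than an artifact of the KKT equations, and exposes the dichotomy between the active and inactive SINR regimes (i.e., \emph{why} the constraint binds at the optimum), which in the paper is encoded implicitly in the sign of $\omega$. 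Both derivations produce identical expressions for $\mW_1$, $\mR_X$, and the univariate equation for $\Gamma_1$.
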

 \begin{proof} See Appendix \ref{proof_theorem2}.
  \end{proof}
  We can observe from \eqref{WRX} that the optimal closed-form solution given in Theorem \ref{theorem_closed}
satisfy the   rank-one constraints, i.e., $\textmd{rank}(\mW_k)=1$ for all $k\in[K]$. Hence it is also
the optimal solution to   problem \eqref{beamform} in the single-user case. Compared with \cite[Theorem 3]{liu2021cramer},  $\Gamma_1$ here is a variable    rather than a given constant and   the optimal $\Gamma_1$ must satisfy the equation \eqref{gamma_equation}.
 \subsubsection{Multi-User Orthogonal Case}
In the general multi-user case, it is difficult to obtain the closed-form solution for problem \eqref{beamform_multi1}, since there exist nonconvex terms in \eqref{beamform_multi_cons13}. If all interference terms are zero, then the nonconvex constraints \eqref{beamform_multi_cons13} are convex. In the following lemma, we show that all interference terms are indeed zero at the optimal solution of problem (\ref{beamform_multi1}) when the channels of different users are orthogonal.
 \begin{lemma}\label{multi_lemma}
 Suppose that the communication channels $\left\{\vh_k\right\}_{k=1}^K$ are orthogonal to each other, i.e., $\vh_i^H\vh_j=0$   for all $i \neq j$, $i,~j\in[K]$. Then there exists an optimal solution  $\mR_X$  and $\{\mW_k\}_{k=1}^K$ such that all interference terms are minimized to  be zero, i.e.,
\begin{align}\label{inter_Qk}
\tr(\mQ_k(\mR_X-\mW_k))=0,~k\in[K].
\end{align}
 In addition, let $\lambda_{k}=\vu_k^H\mR_X\vu_k$  and $\vu_k={\vh_k}/{\|\vh_k\|}$  for all $k\in [K]$. Then $\lambda_{k}$ and $\vu_k$   is an eigenpair of the optimal $\mR_X$ of problem \eqref{beamform_multi1} for all $k\in[K]$.
   \end{lemma}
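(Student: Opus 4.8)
The plan is to begin from an arbitrary optimal solution $(\bar{\mR}_X,\{\bar{\Gamma}_k\}_{k=1}^K,\{\bar{\mW}_k\}_{k=1}^K)$ of problem \eqref{beamform_multi1} and construct from it another optimal solution enjoying the two asserted properties. Because the objective \eqref{beamform_multi_obj1} contains the term $\rho\tr(\mR_X^{-1})$, any feasible point with finite objective must have $\bar{\mR}_X\succ 0$, so in particular $\lambda_k:=\vu_k^H\bar{\mR}_X\vu_k>0$ for every $k\in[K]$. I would first rewrite the SINR constraint \eqref{beamform_multi_cons13} equivalently as $(1+\Gamma_k)\tr(\mQ_k\mW_k)-\Gamma_k\tr(\mQ_k\mR_X)\ge\Gamma_k\sigma_C^2$, and record that the interference term $\tr(\mQ_k(\mR_X-\mW_k))=\vh_k^H(\mR_X-\mW_k)\vh_k$ is nonnegative, since \eqref{beamform_multi_cons14} together with $\bar{\mW}_j\succeq 0$ gives $\bar{\mR}_X\succeq\bar{\mW}_k$; this justifies that driving \eqref{inter_Qk} to zero is indeed a minimization.

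The construction is as follows. Since $\vh_i^H\vh_j=0$ for $i\ne j$, the set $\{\vu_k\}_{k=1}^K$ is orthonormal; extend it to an orthonormal basis $\{\vu_i\}_{i=1}^{N_t}$ of $\mathbb{C}^{N_t}$, collected as the unitary $U=[\vu_1,\ldots,\vu_{N_t}]$. Let $\hat{\mR}_X$ be obtained from $\bar{\mR}_X$ by deleting, in this basis, every off-diagonal entry lying in one of the first $K$ rows or columns; equivalently, $\hat{\mR}_X\vu_k=\lambda_k\vu_k$ for $k\in[K]$ while the principal block on $\mathrm{span}\{\vu_{K+1},\ldots,\vu_{N_t}\}$ is left unchanged. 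I would then set $\tilde{\mW}_k=\lambda_k\vu_k\vu_k^H$ and keep $\tilde{\Gamma}_k=\bar{\Gamma}_k$.

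Next I would check feasibility and the stated properties. The diagonalization preserves all diagonal entries in the $U$ basis, hence $\tr(\hat{\mR}_X)=\tr(\bar{\mR}_X)\le P_T$ and $\hat{\mR}_X\succ 0$; clearly $\tilde{\mW}_k\succeq 0$. Since $\hat{\mR}_X\vu_k=\lambda_k\vu_k$ and $\sum_{j}\tilde{\mW}_j$ acts as $\lambda_k\vu_k\vu_k^H$ on $\mathrm{span}\{\vu_1,\ldots,\vu_K\}$ and vanishes on its orthogonal complement, the residual $\hat{\mR}_X-\sum_j\tilde{\mW}_j$ equals the positive semidefinite bottom-right block, establishing \eqref{beamform_multi_cons14}. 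Orthogonality then gives $\tr(\mQ_k(\hat{\mR}_X-\tilde{\mW}_k))=\|\vh_k\|^2(\lambda_k-\lambda_k)=0$, which is exactly \eqref{inter_Qk} and simultaneously certifies that $(\lambda_k,\vu_k)$ is an eigenpair of $\hat{\mR}_X$. For the SINR constraint, zero interference collapses it to $\|\vh_k\|^2\lambda_k\ge\bar{\Gamma}_k\sigma_C^2$; I would obtain this from the original constraint by using $\bar{a}_k:=\vh_k^H\bar{\mW}_k\vh_k\le\vh_k^H\bar{\mR}_X\vh_k=\|\vh_k\|^2\lambda_k$ (again from $\bar{\mR}_X\succeq\bar{\mW}_k$), which turns $(1+\bar{\Gamma}_k)\bar{a}_k-\bar{\Gamma}_k\|\vh_k\|^2\lambda_k\ge\bar{\Gamma}_k\sigma_C^2$ into $\|\vh_k\|^2\lambda_k\ge\bar{\Gamma}_k\sigma_C^2$.

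The crux, and the step I expect to be the main obstacle, is to show that the diagonalization does not increase the objective, i.e.\ $\tr(\hat{\mR}_X^{-1})\le\tr(\bar{\mR}_X^{-1})$ (the $\Gamma_k$-dependent part being untouched). I would prove this by zeroing the coupling of one direction at a time: writing the current matrix in block form $\left(\begin{smallmatrix}\alpha&\vb^H\\\vb&C\end{smallmatrix}\right)$ with $\alpha=\lambda_k$, the block-inversion (Schur-complement) formula yields $\tr\!\left(\left(\begin{smallmatrix}\alpha&\vb^H\\\vb&C\end{smallmatrix}\right)^{-1}\right)-\big(\alpha^{-1}+\tr(C^{-1})\big)=(s^{-1}-\alpha^{-1})+s^{-1}\vb^HC^{-2}\vb\ge 0$, where $s=\alpha-\vb^HC^{-1}\vb\in(0,\alpha]$ is the Schur complement and both summands are nonnegative. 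Applying this to $\vu_1,\ldots,\vu_K$ successively gives $\tr(\hat{\mR}_X^{-1})\le\tr(\bar{\mR}_X^{-1})$. Consequently $(\hat{\mR}_X,\{\bar{\Gamma}_k\},\{\tilde{\mW}_k\})$ is feasible with objective no larger than the optimal value, hence optimal, and it satisfies \eqref{inter_Qk} together with the eigenpair property, completing the argument.
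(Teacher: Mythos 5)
Your proof is correct, and it takes a genuinely different --- and more self-contained --- route than the paper's. The paper keeps the optimal $\bar{\mR}_X$ and $\{\bar{\Gamma}_k\}_{k=1}^K$ fixed and only replaces each $\mW_k$ by $(\vu_k^H\bar{\mR}_X\vu_k)\vu_k\vu_k^H$, asserting optimality of the new point solely because the interference terms vanish; it then writes $\bar{\mR}_X=\sum_{k=1}^{K}\mW_k+\mW_{K+1}$, uses channel orthogonality to get $\vh_k^H\mW_{K+1}\vh_k=0$, and concludes $\mW_{K+1}\vu_k=\zerob$, hence $\bar{\mR}_X\vu_k=\lambda_k\vu_k$. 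That last step needs $\mW_{K+1}\succeq 0$, i.e., exactly constraint \eqref{beamform_multi_cons14} for the \emph{constructed} $\mW_k$'s, and the paper never verifies it; indeed it can fail for a general feasible $\mR_X$ (take $\mR_X=\left(\begin{smallmatrix}1&0.9\\0.9&1\end{smallmatrix}\right)$ and $\vu_1=\ve_1$: then $\mR_X-(\vu_1^H\mR_X\vu_1)\vu_1\vu_1^H$ is indefinite), and at optimality it holds only because of the very eigenvector property being proven. Your construction sidesteps this by pinching $\mR_X$ itself to be block-diagonal in the orthonormal basis extending $\{\vu_k\}_{k=1}^K$, so that \eqref{beamform_multi_cons14} holds by construction, and then supplies the missing optimality argument through the Schur-complement inequality $\tr(M^{-1})\geq\alpha^{-1}+\tr(C^{-1})$, which you state and prove correctly; this inequality is the genuinely new ingredient relative to the paper. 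A pleasant by-product is its equality case ($\vb=\zerob$): since a strict decrease of $\tr(\mR_X^{-1})$ would contradict optimality, every optimal $\mR_X$ must already have each $\vu_k$ as an eigenvector, which retroactively legitimizes the paper's shorter construction. Two cosmetic remarks: positive definiteness of $\hat{\mR}_X$ follows from its block structure ($\lambda_k>0$ together with a positive definite principal submatrix of $\bar{\mR}_X$), not from preservation of the diagonal as your wording suggests; and your SINR verification implicitly uses $1+\bar{\Gamma}_k>0$, which is automatic because $\log(1+\bar{\Gamma}_k)$ must be well defined at the optimum.
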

Since the proof of Lemma \ref{multi_lemma} is essentially the same as that of Theorem \ref{theorem_closed}, we provide the proof in the supplemental material for completeness.

Under the condition of Lemma \ref{multi_lemma}, the optimization problem \eqref{beamform_multi1}  can   be simplified into
\begin{subequations}\label{multi_user0}
\begin{align}
\label{multi_user1}\min_{\substack{\{\lambda_{i}\}_{i=1}^{N_t},\{\Gamma_k\}_{k=1}^K}}&~ -\sum_{k\in[K]}\log\left(1+\Gamma_k\right)+ \rho \sum_{i\in[N_t]}\lambda_{i}^{-1} \\
 \textmd{s.t.}~~~~~~~ & ~~~~ \lambda_{i}\|\vh_i\|^2\geq \Gamma_i\sigma_C^2, ~  i\in[K],\\
 &~~~\sum_{i\in[N_t]}\lambda_{i}\leq P_T,~\lambda_{i}> 0,~ i\in[N_t].
\end{align}
\end{subequations}
Obviously, problem \eqref{multi_user0} is a convex optimization problem, which can be solved efficiently and globally   (e.g., by CVXPY \cite{diamond2016cvxpy}).

The orthogonal condition  on the communication channel vectors is difficult to satisfy in practice.  However, if
  $\{\vh_k\}_{k=1}^K$ are isotropic and independent random channels, then they tend to be
almost orthogonal in high-dimensional spaces with high probability (see \cite[Lemma 3.2.4]{vershynin2018high}).
\vspace{-2mm}
 \section{Proposed Branch-and-bound Algorithm}\label{sec_BB}
 In this section, we propose a global optimization algorithm based on the B\&B scheme for solving the nonconvex problem \eqref{beamform_multi1} in the general multi-user case.
 Since the efficiency of the B\&B algorithm relies on the quality of the lower bound, we are motivated to find  a tight convex relaxation of the nonconvex set defined by \eqref{beamform_multi_cons13}.  To do so, we first present a convex relaxation of problem (\ref{beamform_multi1}) based on the McCormick  envelope in Section IV-A. Then we propose the global B\&B algorithm for solving problem (\ref{beamform_multi1}) in Section IV-B.
\vspace{-3mm}
\subsection{An McCormick Envelope based Relaxation}
Let us introduce an auxiliary variable $a_k$ for each $k\in[K]$. Then the $k$-th nonconvex constraint in \eqref{beamform_multi_cons13} is rewritten as
\begin{align}
\nonumber &\tr(\mQ_k\mW_k)-a_k\geq \Gamma_k\sigma_C^2, \\
\label{eqn_sk} &a_k = \Gamma_k\tr(\mQ_k(\mR_X-\mW_k)).
\end{align}
One can observe that  there is still a  bilinear term in \eqref{eqn_sk}.
\begin{figure}[t]
\vspace{-0.5cm}
\begin{center}
\includegraphics[width=0.7\linewidth]{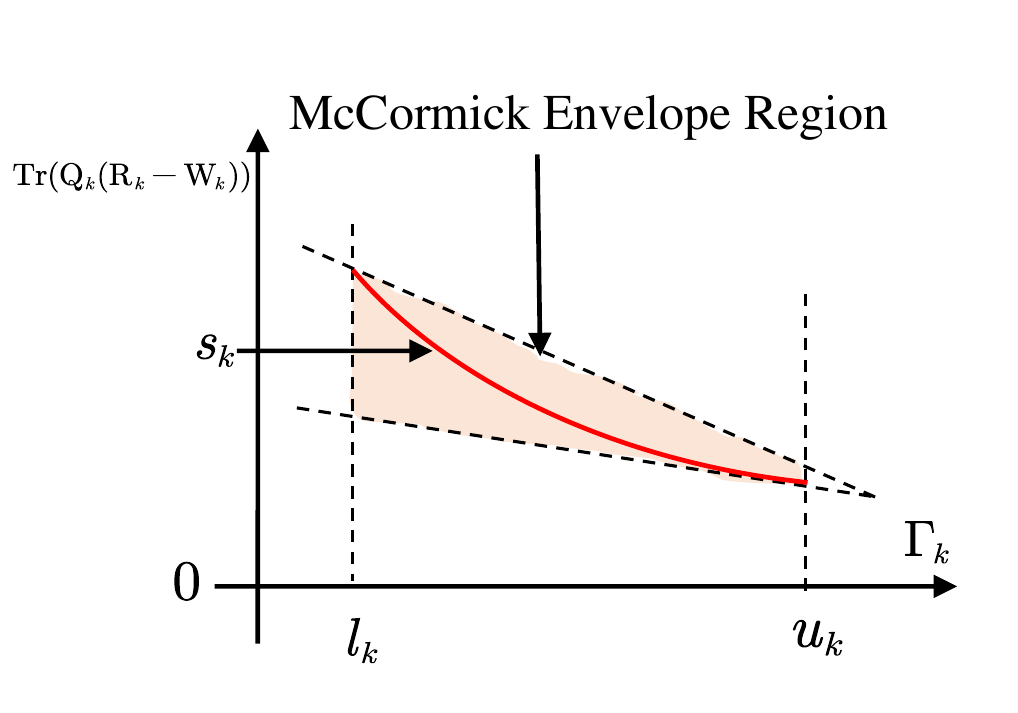}
\vspace{-0.5cm}
\caption{An illustration of the convex McCormick envelope.}
\label{McCormick_Envelopes}
\end{center}
\vspace{-0.8cm}
\end{figure}
Next, we develop a convex relaxation for \eqref{eqn_sk} based on the McCormick envelope \cite{mitsos2009mccormick}.
\begin{lemma}\label{lemma_ebve}
 Given any $k\in[K],$ assume there exist $\ell_k, u_k,$ and $b_k$ such that $\ell_k\leq\Gamma_k\leq u_k$ and $0\leq\tr(\mQ_k(\mR_X-\mW_k))\leq b_k$. Then the McCormick envelope for the bilinear constraint \eqref{eqn_sk} is
\vspace{-4mm}
\begin{subequations}\label{enve_lopes}
\begin{align}
\label{enve_lopes1} a_k \geq& ~\ell_k\tr(\mQ_k(\mR_X-\mW_k)),\\
\label{enve_lopes2} a_k \geq&~ u_k\tr(\mQ_k(\mR_X-\mW_k))+(\Gamma_k-u_k)b_k,\\
\label{enve_lopes3} a_k \leq&~ u_k\tr(\mQ_k(\mR_X-\mW_k)),\\
\label{enve_lopes4} a_k \leq&~ (\Gamma_k-\ell_k)b_k+\ell_k\tr(\mQ_k(\mR_X-\mW_k)),
\end{align}
\end{subequations}
all of which are linear constraints with respect to $\mR_X, a_k, \Gamma_k$, and $\mW_k$.
\end{lemma}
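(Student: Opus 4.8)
The plan is to recognize that the equality constraint \eqref{eqn_sk} is simply a bilinear product $a_k=\Gamma_k\,t_k$, where I write $t_k:=\tr(\mQ_k(\mR_X-\mW_k))$ for the affine expression appearing inside, and that the four inequalities \eqref{enve_lopes1}--\eqref{enve_lopes4} are exactly the standard McCormick under- and over-estimators of this product over the box $\Gamma_k\in[\ell_k,u_k]$, $t_k\in[0,b_k]$. Accordingly, the proof reduces to verifying each of the four inequalities directly from the bound hypotheses and then confirming the claimed linearity; no relaxation set needs to be constructed from scratch.

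First I would establish the two lower bounds. Since the hypotheses give $\Gamma_k-\ell_k\ge 0$ and $t_k\ge 0$, the product of these nonnegative factors satisfies $(\Gamma_k-\ell_k)\,t_k\ge 0$; expanding and substituting $a_k=\Gamma_k t_k$ yields $a_k\ge \ell_k t_k$, which is \eqref{enve_lopes1}. Similarly, from $u_k-\Gamma_k\ge 0$ and $b_k-t_k\ge 0$ I get $(u_k-\Gamma_k)(b_k-t_k)\ge 0$, and expanding this product and again substituting $a_k=\Gamma_k t_k$ gives $a_k\ge u_k t_k+(\Gamma_k-u_k)b_k$, which is \eqref{enve_lopes2}. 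The two upper bounds follow by the same device applied to the oppositely paired factors: $(u_k-\Gamma_k)\,t_k\ge 0$ produces $a_k\le u_k t_k$, i.e.\ \eqref{enve_lopes3}, while $(\Gamma_k-\ell_k)(b_k-t_k)\ge 0$ produces $a_k\le(\Gamma_k-\ell_k)b_k+\ell_k t_k$, i.e.\ \eqref{enve_lopes4}. In every case the nonnegativity of the product is immediate from the box constraints, and the cross term of the expansion reproduces $a_k$ together with an affine remainder in the correct inequality direction.

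Finally I would verify the linearity claim. Because the trace is a linear operator, $t_k=\tr(\mQ_k\mR_X)-\tr(\mQ_k\mW_k)$ is affine in $(\mR_X,\mW_k)$; and since $\ell_k,u_k,b_k$ are fixed constants determined by the prescribed bounds, each right-hand side in \eqref{enve_lopes1}--\eqref{enve_lopes4} is affine in $(\mR_X,\mW_k,\Gamma_k)$ while each left-hand side is just $a_k$. Hence all four are linear (affine) constraints in the variables $\mR_X,a_k,\Gamma_k,\mW_k$, as asserted.

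I do not anticipate any genuine obstacle, as the derivation is mechanical. The only point requiring care is the sign bookkeeping: one must pair the bound-difference factors consistently so that the cross term $\Gamma_k t_k$ reproduces $a_k$ with the intended inequality sense, and one should note that it is precisely the hypothesis $t_k\ge 0$ (rather than a generic lower bound) that makes the constant terms in \eqref{enve_lopes1} and \eqref{enve_lopes3} vanish, giving those two inequalities their simple homogeneous form.
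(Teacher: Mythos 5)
Your proof is correct and is precisely the standard McCormick-envelope derivation: pairing the nonnegative bound-difference factors, expanding each product, and substituting $a_k=\Gamma_k t_k$ yields all four valid linear inequalities, and the linearity claim follows from the linearity of the trace. The paper itself states this lemma without proof, citing the McCormick relaxation literature, so your argument supplies exactly the standard verification that the paper implicitly relies on; the only point worth noting is that your argument establishes the four inequalities as a \emph{valid relaxation} of \eqref{eqn_sk} over the box, which is all the paper ever uses, rather than the stronger (also true, but unneeded) fact that they describe the convex hull of the bilinear set.
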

%\begin{proof}
%First we manipulate the bounds
%\begin{align*}
%\Gamma_k-l_k\geq 0,~\tr(\mQ_k(\mR_X-\mW_k))\geq 0\\
%u_k-\Gamma_k\geq 0,~b_k-\tr(\mQ_k(\mR_X-\mW_k))\geq 0.
%\end{align*}
%Multiplying the first of each bound, we obtain
%\begin{align*}
% (\Gamma_k-l_k)\tr(\mQ_k(\mR_X-\mW_k))\geq 0.
%\end{align*}
%Substituting \eqref{eqn_sk} into the above inequality yielding \eqref{enve_lopes1}. Similarly, we can obtain the other linear inequalities \eqref{enve_lopes2}-\eqref{enve_lopes4}.
%\end{proof}
 An illustration for the linear constraints (\ref{enve_lopes1})-(\ref{enve_lopes4})
and the McCormick envelope region is given in Fig. \ref{McCormick_Envelopes}. From
 the constraint $\tr(\mR_X)\leq P_T$, it is simple to obtain
\begin{align*}
0\leq\Gamma_k\leq \frac{P_T\|\vh_k\|^2}{\sigma_C^2} ~\text{and}~ \tr(\mQ_k(\mR_X-\mW_k))\leq P_T\|\vh_k\|^2.
\end{align*}
The above lower and upper bounds on the SINR and interference terms provide desirable bounds in Lemma \ref{lemma_ebve}.

Setting $b_k=P_T\|\vh_k\|^2$ in the above Lemma \ref{lemma_ebve}, we immediately obtain the following convex McCormick envelope based relaxation (MER) of problem \eqref{beamform_multi1}:
%\begin{subequations}\label{MER_multi1}
%\begin{align}
%\label{MER_multi_obj1}&\min_{\mV} \overbrace{-\sum_{k=1}^K\log(1+\Gamma_k)+ \rho \tr\left(\mR_X^{-1}\right)}^{\Phi(\mV)}\\
%\label{MER_multi_cons14}&~~\textmd{s.t.}~  \mV\in \Omega\cap\mathcal{Q},
%\end{align}
%\end{subequations}
\vspace{-1mm}
 \begin{subequations}\label{MER_multi1}
\begin{align}
\label{MER_multi_obj1}&\min_{\substack{\mR_X,\{\Gamma_k\}_{k=1}^K,\\
\{\mW_k\}_{k=1}^K,\{a_k\}_{k=1}^K}} \overbrace{-\sum_{k\in[K]}\log(1+\Gamma_k)+ \rho \tr\left(\mR_X^{-1}\right)}^{\Phi\left(\mR_X, \{\Gamma_k\}_{k=1}^K, \{\mW_k\}_{k=1}^K\right)}\\[-2mm]
\label{MER_multi_cons12} &\qquad~~\textmd{s.t.}~  \tr(\mR_X)\leq P_T, ~\mR_X\succeq \sum_{k=1}^{K}\mW_k,~ \mW_k\succeq 0,\\
\label{MER_multi_cons13}&\qquad\qquad~\tr(\mQ_k\mW_k)-a_k\geq \Gamma_k\sigma_C^2,~\eqref{enve_lopes}, \\
\label{MER_multi_cons123}& \qquad\qquad~0\leq\tr(\mQ_k(\mR_X-\mW_k))\leq b_k,~ k\in[K],\\
\label{MER_multi_cons14}&\qquad\qquad~\ell_k\leq\Gamma_k\leq u_k,~ k\in[K].
\end{align}
\end{subequations}
%where $\mV:=\left[\mR_X,\{\Gamma_k\}_{k=1}^K,\{\mW_k\}_{k=1}^K,\{a_k\}_{k=1}^K\right]$ {\red is a collection of all variables},
%\begin{align*}
%\Omega&:=\left\{
%\begin{aligned}
%&\tr(\mR_X)\leq P_T, \mR_X\succeq \sum_{k\in[K]}\mW_k, \mW_k\succeq 0,\\
%&\tr(\mQ_k\mW_k)-a_k\geq \Gamma_k\sigma_C^2,~\eqref{enve_lopes},\\
%&0\leq\tr(\mQ_k(\mR_X-\mW_k))\leq b_k, k\in[K].
%\end{aligned}
%\right\}\\
%\mathcal{Q}&:=\left\{\ell_k\leq\Gamma_k\leq u_k, k\in[K]\right\}.
%\end{align*}
 The above problem \eqref{MER_multi1} is a convex problem that can be solved efficiently and globally via CVXPY \cite{diamond2016cvxpy} and its optimal value provides a lower bound for  the optimal value of problem \eqref{beamform_multi1}. Obviously,  the quality of this lower bound heavily depends on the choices of the rectangle set $\prod_{k=1}^K[\ell_k, u_k]$.

%Let us take $b_k=+\infty, k=1,\ldots,K$, fortunately, the constraint \eqref{enve_lopes} can be reduced to
%\begin{align}
%\label{enve_lopes_relax1} a_k\geq& l_k\tr(\mQ_k(\mR_X-\mW_k)), ~k=1,\ldots,K,
%\end{align}
%which only has $K$ linear inequality. Then we obtain the following convex McCormick envelopes based relaxation (MER) of problem \eqref{beamform_multi1}:
% \begin{subequations}\label{MER_multi_relax1}
%\begin{align}
%\label{MER_multi_obj_relax1}&\min_{\substack{\Gamma_k,\mW_k,\mR_X}}~ -\sum_{k=1}^K\log(1+\Gamma_k)+ \rho \tr\left(\mR_X^{-1}\right) \\
%\label{MER_multi_cons_relax12} &\qquad\textmd{s.t.}  ~ \tr(\mR_X)\leq P_T, \mR_X\succeq \sum_{k\in[K]}\mW_k, \mW_k\succeq 0,\\
%\label{MER_multi_cons_relax13}&\qquad\tr(\mQ_k\mW_k)\geq \Gamma_k\sigma_C^2+l_k\tr(\mQ_k(\mR_X-\mW_k)),\\
%\nonumber&\qquad l_k\leq\Gamma_k\leq u_k, \forall k.
%\end{align}
%\end{subequations}
%Comparing \eqref{MER_multi1} with \eqref{MER_multi_relax1}, optimization \eqref{MER_multi_relax1} has less constraints and variables then it can be solved more efficient. However, problem \eqref{MER_multi1} has a tighter relaxation for the nonconvex optimization \eqref{beamform_multi1}.

\subsection{Proposed B\&B Algorithm}
Now we are ready to present our proposed B\&B algorithm for globally solving problem \eqref{beamform_multi1}. The basic idea of the proposed
algorithm is to relax the original  nonconvex problem \eqref{beamform_multi1} with bilinear constraints to the convex MER \eqref{MER_multi1} and gradually tighten the relaxation by reducing the width of the associated intervals $[\ell_k,u_k]$ for the $k$-th communication user.  The B\&B algorithm uses a tree search strategy to store all of the above MER relaxation subproblems defined over different feasible regions as well as their solutions and  to implicitly enumerate them to find the global solution of the original problem.

%For ease of presentation, we first introduce some notations. Let us denote $\mathcal{N}_i^d$ corresponding to the feasible region for the $i$-th node at the $d$-th level. Let $t$ denote the iteration index of the B\&B algorithm, where an iteration corresponds to a branching operation. $\mV_i^d$ means the optimal solution of MER problem over feasible region $\mathcal{N}_i^d$. Let $\hat{\mV}_i^d$ denote the feasible solution for the original problem \eqref{beamform_multi1}.
%Let $\mathcal{P}^t$ denote the collection of $(i,d)$ that corresponding to the  $i$-th unbranched node at the $d$-th level.
%Then we can present the following key components of the proposed B\&B algorithm.

For ease of presentation, we first introduce some notations. Let MER($\mathcal{Q}$) denote  the corresponding MER problem defined over the rectangle set $\mathcal{Q}:=\prod_{k=1}^K[\ell_k,u_k]$; let $L$ be the optimal value of MER($\mathcal{Q}$) and $\mV:=\left[\mR_X,\{\Gamma_k\}_{k=1}^K,\{\mW_k\}_{k=1}^K,\{a_k\}_{k=1}^K\right]$  be a collection of  the optimal solutions of all variables; let  $\mathcal{P}$ denote the constructed problem list   of all unbranched nodes and $\{\mathcal{Q},\mV, L\}$ denote a problem from
the list $\mathcal{P}$; let $t$ denote the iteration index
of the B\&B algorithm, where an iteration corresponds to a
branching operation;  let  $U^t$ and $\bar{\mV}^t$ denote the upper bound and the best known feasible solution at the $t$-th iteration, respectively; let $\mV^*$ denote the best known feasible solution and $U^*$ denote the
objective value of problem \eqref{beamform_multi1} at $\mV^*$.
Then we can present the following key components of the proposed B\&B algorithm.
% Let us first introduce some notations for ease of presentation. Let MER($\mathcal{Q}$) denote as the corresponding MER problem defined over the rectangle set $\mathcal{Q}:=\prod_{k=1}^K[\ell_k,u_k]$;
% let $L$ be the optimal value of MER($\mathcal{Q}$) and $\mV:=\left[\{\Gamma_k\}_{k=1}^K,\{a_k\}_{k=1}^K,\{\mW_k\}_{k=1}^K,\mR_X\right]$ be its
%optimal solution;  let $L^t$ and $U^t$ denote the lower bound and the upper bound at the $t$-th iteration; denote $\mathcal{N}_i^d$ corresponding to the $i$-th node at the $d$-th level.
%
%let $\mV^*$ denote the best known feasible solution and $U^*$ denote the
%objective value of problem \eqref{beamform_multi1} at $\mV^*$.

 \emph{Initialization}: We initialize all intervals $[\ell_k^0,u_k^0]$ for all $k\in[K]$ to be $\left[0, P_T\|\vh_k\|^2/\sigma_C^2\right]$ for the MER problem \eqref{MER_multi1} and $\mathcal{Q}^0:= \prod_{k=1}^K[\ell_k^0,u_k^0]$. In this
case, problem \eqref{MER_multi1} reduces to the convex optimization problem MER($\mathcal{Q}^0$) as follows:
 \begin{subequations}\label{MER_multi_re0}
\begin{align}
\label{MER_multi_re01}&\min_{\mV } ~\Phi\left(\mR_X, \{\Gamma_k\}_{k=1}^K,\{\mW_k\}_{k=1}^K\right)\\
\label{MER_multi_re02}&~\textmd{s.t.}~~ \eqref{MER_multi_cons12}-\eqref{MER_multi_cons123},~ \Gamma_k\in \mathcal{Q}^0,~k\in[K].
\end{align}
\end{subequations}
Denote its optimal solution and optimal value by $\mV^0$ and $L^0$, respectively.

 \emph{Termination}: Let $\{\mathcal{Q}^t,\mV^t, L^t\}$ denote the problem instance that has the least lower bound in the problem list $\mathcal{P}$. Given an error tolerance $\epsilon$, if
 \vspace{-1mm}
\begin{align}\label{eqn:term}
U^t-L^t\leq \epsilon,
\end{align}
we stop the algorithm; otherwise we branch one interval in $\mathcal{Q}^t$ as specified below in \eqref{eqn:k}.

 \emph{Branch}: Suppose that the stopping criterion in \eqref{eqn:term} is not satisfied, we select one interval that leads to the largest relaxation gap to be branched  into two smaller sub-intervals. Let $\mV^t$ be the optimal solution of problem MER($\mathcal{Q}^t$). Since problem MER($\mathcal{Q}^t$) is a relaxation of problem \eqref{beamform_multi1}, its solution might not satisfy the constraint \eqref{beamform_multi_cons13}. Fortunately, we can construct a feasible solution $\hat{\mV}^t:=\left[\hat{\mR}_X,\{\hat{\Gamma}_k\}_{k=1}^K,\{\hat{\mW}_k\}_{k=1}^K,\{\hat{a}_k\}_{k=1}^K\right]$ to problem \eqref{beamform_multi1} based on the solution  $\mV^t$ of MER($\mathcal{Q}^t$) as follows:
 \vspace{-2mm}
     \begin{subequations}\label{eqn:feas}
    \begin{align}
    \hat{\mR}_X^t&= \mR_X^t,~\hat{\mW}_k^t=\mW_k^t,~ \hat{a}_k^t = a_k^t,~k\in[K],\\
\hat{\Gamma}_k^t& =  \frac{\Gamma_k^t\sigma_C^2+\ell_k^t\tr(\mQ_k(\mR_X^t-\mW_k^t))}{\sigma_C^2+\tr(\mQ_k(\mR_X^t-\mW_k^t))}, ~k\in[K].
\end{align}
\end{subequations}
Substituting  \eqref{eqn:feas} into the constraints \eqref{beamform_multi_cons12}--\eqref{beamform_multi_cons13}, we can observe that $\hat{\mV}^t$ is a feasible solution to problem \eqref{beamform_multi1}.
 The  constructed solution in \eqref{eqn:feas} plays a central role in improving the upper bound in the  B\&B algorithm and in selecting the user that leads to the largest relaxation gap to be branched.

In particular, we use the following rule to  select the user that has the largest relative relaxation gap:
\vspace{-2mm}
\begin{align}\label{eqn:k}
 k^* &=\mathop{\arg\max}\limits_{k\in [K]} \left\{ \frac{\Gamma_k^t-\hat{\Gamma}_k^t}{1+\hat{\Gamma}_k^t}\right\}.
%\left\{
%  \begin{array}{ll}
%    \mathop{\arg\max}\limits_{k\in \{1,\ldots,K\}}\frac{\Gamma_k^t-\ell_k^t}{1+\ell_k^t}, & \hbox{Rule I;} \\
%    , & \hbox{Rule II.}
%  \end{array}
%\right.
\end{align}
It is clear that the numerator in \eqref{eqn:k} is the gap between the predicted SINR (by the relaxation) and the practically achieved SINR of user $k$ and hence the quantity in \eqref{eqn:k} measures the relative relaxation gap between the predicted and achieved SINRs of user $k.$

 Next,
 we  partition $\mathcal{Q}^t$ into two sets (denoted as $\mathcal{Q}_1^t$ and $\mathcal{Q}_2^t$) by partitioning its $k^*$-th interval into two equal intervals and keep all the others being unchanged. Specifically,
 \begin{align}
\label{eqn_q1}\mathcal{Q}_1^t&=\{\Gamma_k\in \mathcal{Q}^t\mid\Gamma_{k^{*}}\leq z_{k^*}^t\},\\
\label{eqn_q2}\mathcal{Q}_2^t&=\{\Gamma_k\in \mathcal{Q}^t\mid\Gamma_{k^{*}}\geq z_{k^*}^t\},
\end{align}
 where $z_{k^*}^t={(\ell_{k^*}^t+u_{k^*}^t)}/{2}$.
 Then we solve the MER subproblems defined over the two newly obtained  smaller sets $\mathcal{Q}_1^t$ and $\mathcal{Q}_2^t$, which are called children problems. Obviously, the two children problems obtained from partitioning $\mathcal{Q}^t$ are tighter than the one defined over the original set $\mathcal{Q}^t$. In this way, the B\&B process gradually tightens the relaxations and is able to find a (nearly) global solution satisfying the condition in \eqref{eqn:term}.
    When $\mathcal{Q}^t$ has been branched into two sets, the MER problem defined over $\mathcal{Q}^t$ will be deleted from the problem list $\mathcal{P}$, and the two corresponding children problems   defined over  $\mathcal{Q}_1^t$ and $\mathcal{Q}_2^t$ will be added into $\mathcal{P}$.
    % if their optimal objective values are less than or equal to the current upper bound.

\emph{Lower Bound}: For any problem instance $\{\mathcal{Q},\mV, L\}$, $L$ is the lower bound of the optimal value of the original nonconvex problem \eqref{beamform_multi1} defined over $\mathcal{Q}$.
Therefore, the smallest one among all bounds is a lower bound of the optimal value of the original problem \eqref{beamform_multi1}.  At the $t$-th iteration, we choose a problem instance from $\mathcal{P}$, denoted as $\{\mathcal{Q}^t,\mV^t,L^t\}$, such that the bound $L^t$ is the smallest one in $\mathcal{P}$.

\emph{Upper Bound}: An upper bound is obtained from the best known feasible solution of \eqref{beamform_multi1}.
Since $\hat{\mR}_X^t$, $\{\hat{\Gamma}_k^t\}_{k=1}^K$, $\{\hat{\mW}_k^t\}_{k=1}^K$ is a feasible solution for problem \eqref{beamform_multi1}, then
\begin{align}\label{eqn:upper_obj}
\hat{U}^t:=\Phi\left(\hat{\mR}_X^t, \{\hat{\Gamma}_k^t\}_{k=1}^K,\{\hat{\mW}_k^t\}_{k=1}^K\right)
\end{align}
is an upper bound of the original problem. In our proposed algorithm, the upper
bound $U^t$ is the best objective values at all of the known feasible
solutions at the $t$-th iteration.

\begin{algorithm}[t]
\begin{algorithmic}[1]
\STATE {\bf Input} Problem instance $\{\vh_k\}_{k=1}^K$,  error tolerance $\epsilon>0$.
\STATE {\bf Initialization}  Solve problem MER$(\mathcal{Q}^0)$ in \eqref{MER_multi_re0} to obtain  $\{\mathcal{Q}^0,\mV^0,L^0\}$ and add it into the problem list $\mathcal{P}$. Compute a feasible point $\hat{\mV}^0$ and an upper bound
$U^0=\hat{U}^0$ by \eqref{eqn:feas} and \eqref{eqn:upper_obj}, respectively.
\FOR{ $t=0,1,\ldots$}
\IF{$U^t-L^t<\epsilon$}
\STATE terminate the algorithm, set $U^*=U^t$ and $\mV^*=\bar{\mV}^{t}$.
%\ELSE[$N$ is odd]
%\STATE step=1
\ENDIF
\STATE  Choose the problem that has the lowest bound from $\mathcal{P}$ and
delete it from $\mathcal{P}$.
\STATE Obtain user index $k^*$ by \eqref{eqn:k}.
\STATE Branch $\mathcal{Q}^t$ into two sets $\mathcal{Q}_1^t$ and $\mathcal{Q}_2^t$  in \eqref{eqn_q1} and \eqref{eqn_q2}.
\FOR{ $j=1,2$}
\STATE Solve MER problem \eqref{MER_multi1}  defined over set $\mathcal{Q}_j^t$  to obtain its optimal solution $\mV_j^t$ and lower bound $L_j^t$.
\STATE Compute  $\hat{\mV}_j^t$ and  $\hat{U}_j^t$ by \eqref{eqn:feas} and \eqref{eqn:upper_obj}, respectively.
\STATE Add $\{\mathcal{Q}_j^t,\mV_{j}^t,L_j^t\}$ into the problem list $\mathcal{P}$.
\ENDFOR
\STATE Compute $j^*=\arg\min_{j\in\{1,2\}}\{\hat{U}_j^t\}$.
\IF{$U^t\geq \hat{U}_{j^*}^t$}
\STATE set $U^t=\hat{U}_{j^*}^t$ and $ \bar{\mV}^t = \hat{\mV}_{j^*}^t$.
\ENDIF
\ENDFOR
\STATE {\bf Return $\mV^*$ and $U^*$.}
\end{algorithmic}
\caption{Proposed B\&B Algorithm Solving Problem \eqref{beamform_multi1}}
\label{alg_BB}
\end{algorithm}
The pseudo-codes of our proposed  B\&B algorithm are given in Algorithm \ref{alg_BB}. To the best of our knowledge, our proposed algorithm is the first global algorithm for solving problem \eqref{beamform_multi1}.

\subsection{Global Optimality of Proposed B\&B Algorithm}
In this subsection,  we show that the proposed B\&B algorithm is guaranteed to find the global solution of problem \eqref{beamform_multi1}. Before presenting the theoretical results of the proposed algorithm, let us first define the
$\epsilon$-optimal solution of problem \eqref{beamform_multi1}.

\begin{definition}
 Given any $\epsilon>0$, a feasible point $\hat{\mR}_X$, $\{\hat{\Gamma}_k\}_{k=1}^K$, $\{\hat{\mW}_k\}_{k=1}^K$ is called an $\epsilon$-optimal solution of problem \eqref{beamform_multi1} if it satisfies
\begin{align}\label{eop_solution}
\Phi\left(\hat{\mR}_X,\{\hat{\Gamma}_k\}_{k=1}^K,\{\hat{\mW}_k\}_{k=1}^K\right)-U^*\leq \epsilon,
\end{align}
where $U^*$ is the
optimal value of problem \eqref{beamform_multi1}.
\end{definition}

Since $L^t$ is the smallest lower bound   in problem list $\mathcal{P}$ at the $t$-th iteration   according to line 7 of Algorithm \ref{alg_BB}, it follows that
$L^t$ is  less than or equal to the optimal value of problem \eqref{beamform_multi1}, i.e., $L^t\leq U^*$ for all $t\geq 1$. Then we immediately obtain
\begin{align*}
U^t-U^*\leq U^t-L^t, ~\forall~ t\geq 1,
\end{align*}
 which further implies that, if the proposed algorithm terminates  (i.e., \eqref{eqn:term} is satisfied), the returned solution
$\hat{\mR}_X^t$, $\{\hat{\Gamma}_k^t\}_{k=1}^K$,  and $\{\hat{\mW}_k^t\}_{k=1}^K$ by the proposed
algorithm is an $\epsilon$-optimal solution of problem \eqref{beamform_multi1}. The following lemma shows that the proposed Algorithm \ref{alg_BB} will terminate.
\begin{lemma}\label{lemma_con}
  For any given $\epsilon > 0$ and any given instance of problem (\ref{beamform_multi1}) with $K$ users, define
\begin{align}\label{eqn:delta}
\delta_{\epsilon} = \frac{1}{2}\left(\exp\left(\frac{\epsilon}{K}\right)-1\right).
\end{align}
 If
\begin{align}\label{lemma_cond}
u_{k^*}^t-\ell_{k^*}^t\leq 2\delta_{\epsilon},
\end{align}
where $k^*$ is obtained in (\ref{eqn:k}),
then the proposed Algorithm \ref{alg_BB} will terminate.
\end{lemma}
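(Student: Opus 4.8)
The plan is to verify that hypothesis \eqref{lemma_cond} forces the stopping test \eqref{eqn:term} to succeed at iteration $t$. Recall that $L^t$ is the optimal value $\Phi(\mR_X^t,\{\Gamma_k^t\}_{k=1}^K,\{\mW_k^t\}_{k=1}^K)$ of the selected relaxation MER$(\mathcal{Q}^t)$, and that from its solution $\mV^t$ the construction \eqref{eqn:feas} yields a feasible point $\hat{\mV}^t$ of problem \eqref{beamform_multi1} with objective value $\hat{U}^t$ given by \eqref{eqn:upper_obj}. Since $U^t$ is, by construction, the smallest objective value among all feasible points generated up to iteration $t$, and $\hat{\mV}^t$ is one such point (generated when the node was created), we have $U^t\le\hat{U}^t$; hence it suffices to show $\hat{U}^t-L^t\le\epsilon$.

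First I would use the structure of \eqref{eqn:feas}. Because $\hat{\mR}_X^t=\mR_X^t$, the term $\rho\tr(\mR_X^{-1})$ is common to $\hat{U}^t$ and $L^t$ and cancels, leaving
\[
\hat{U}^t-L^t=\sum_{k\in[K]}\log\frac{1+\Gamma_k^t}{1+\hat{\Gamma}_k^t}.
\]
Writing $s_k:=\tr(\mQ_k(\mR_X^t-\mW_k^t))\ge 0$ and substituting $\hat{\Gamma}_k^t$ from \eqref{eqn:feas}, a direct computation gives $\Gamma_k^t-\hat{\Gamma}_k^t=(\Gamma_k^t-\ell_k^t)s_k/(\sigma_C^2+s_k)$, which is nonnegative because $\ell_k^t\le\Gamma_k^t$; thus each summand is nonnegative.

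Next comes the key step, where the branching rule \eqref{eqn:k} is exploited. Rewriting each ratio as $1+(\Gamma_k^t-\hat{\Gamma}_k^t)/(1+\hat{\Gamma}_k^t)$ and using that $k^*$ maximizes the relative gap $(\Gamma_k^t-\hat{\Gamma}_k^t)/(1+\hat{\Gamma}_k^t)$ over all $k\in[K]$, monotonicity of $\log$ gives
\[
\hat{U}^t-L^t\le K\log\!\left(1+\frac{\Gamma_{k^*}^t-\hat{\Gamma}_{k^*}^t}{1+\hat{\Gamma}_{k^*}^t}\right).
\]
It then remains to control the single relative gap at $k^*$. Using the identity above together with $s_{k^*}/(\sigma_C^2+s_{k^*})\le 1$ and $1/(1+\hat{\Gamma}_{k^*}^t)\le 1$ (valid since $\hat{\Gamma}_{k^*}^t\ge 0$), I would bound
\[
\frac{\Gamma_{k^*}^t-\hat{\Gamma}_{k^*}^t}{1+\hat{\Gamma}_{k^*}^t}\le \Gamma_{k^*}^t-\ell_{k^*}^t\le u_{k^*}^t-\ell_{k^*}^t\le 2\delta_{\epsilon}.
\]
Since $1+2\delta_{\epsilon}=\exp(\epsilon/K)$ by \eqref{eqn:delta}, this yields $\hat{U}^t-L^t\le K\log(\exp(\epsilon/K))=\epsilon$, so $U^t-L^t\le\epsilon$ and the algorithm terminates.

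The main obstacle is the third paragraph: recognizing that the branching index defined in \eqref{eqn:k} is precisely the device that lets one upper bound every per-user log-gap by a single worst-case relative gap, thereby collapsing the $K$-term sum into $K$ copies of one quantity that the interval-width hypothesis directly controls. The remaining manipulations---the cancellation of the CRB term, the closed-form expression for $\Gamma_k^t-\hat{\Gamma}_k^t$, and the choice of $\delta_{\epsilon}$ making $K\log(1+2\delta_{\epsilon})=\epsilon$---are routine once this observation is in place. A minor bookkeeping point to state carefully is the monotonicity argument giving $U^t\le\hat{U}^t$.
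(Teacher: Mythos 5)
Your proof is correct and follows essentially the same route as the paper's: both start from $U^t\le\hat{U}^t$ (line 17 of Algorithm \ref{alg_BB}), cancel the common CRB term to write $\hat{U}^t-L^t$ as a sum of per-user log relative gaps, use the branching rule \eqref{eqn:k} to bound this sum by $K$ copies of the worst-case relative gap at $k^*$, and then bound that gap by $u_{k^*}^t-\ell_{k^*}^t\le 2\delta_{\epsilon}$ so that the definition of $\delta_{\epsilon}$ yields $U^t-L^t\le\epsilon$. Your explicit computation $\Gamma_k^t-\hat{\Gamma}_k^t=(\Gamma_k^t-\ell_k^t)s_k/(\sigma_C^2+s_k)$ is a slightly more detailed justification of the inequality $\Gamma_{k^*}^t-\hat{\Gamma}_{k^*}^t\le u_{k^*}^t-\ell_{k^*}^t$ that the paper obtains directly from $\ell_{k^*}^t\le\hat{\Gamma}_{k^*}^t$ and $\Gamma_{k^*}^t\le u_{k^*}^t$, but this is a presentational difference, not a different argument.
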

\begin{proof} See Appendix \ref{proof_lemma_con}.
\end{proof}
Now we present the main result of this subsection.
\begin{theorem}\label{theorem_global}
For any given $\epsilon>0$ and any given instance of problem (\ref{beamform_multi1}) with $K$ users, Algorithm \ref{alg_BB} will return an $\epsilon$-optimal solution within
\begin{align}\label{gmax}
T_{\epsilon}:=\left\lceil\left(\frac{\Gamma_{\max}}{\delta_{\epsilon}}\right)^{K}\right\rceil +1
\end{align}
iterations, where $\Gamma_{\max}= \mathop{\max}\limits_{k\in [K]}{P_T\|\vh_k\|^2}/{\sigma_C^2}$, $\delta_{\epsilon}$ is defined in \eqref{eqn:delta}, and $\lceil\cdot\rceil$ denotes the rounding operator.
\end{theorem}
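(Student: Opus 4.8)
The plan is to prove the theorem by bounding the total number of branching operations the algorithm can ever perform before the stopping test \eqref{eqn:term} is triggered; since the paragraph preceding Lemma \ref{lemma_con} already shows that termination returns an $\epsilon$-optimal point (via $U^t-U^*\le U^t-L^t$), it suffices to control the iteration count. The engine of the argument is Lemma \ref{lemma_con}, read in contrapositive form: if the algorithm has \emph{not} terminated at iteration $t$, then the interval of the branched user must satisfy $u_{k^*}^t-\ell_{k^*}^t>2\delta_{\epsilon}$. Hence every bisection the algorithm executes in \eqref{eqn_q1}--\eqref{eqn_q2} splits a coordinate interval of width exceeding $2\delta_{\epsilon}$, so each of the two children inherits a width strictly larger than $\delta_{\epsilon}$ in that coordinate.

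First I would convert this per-step width bound into a uniform lower bound on the size of every node. Viewing each node as an axis-aligned box $\mathcal{Q}=\prod_{k=1}^K[\ell_k,u_k]\subseteq\prod_{k=1}^K[0,\Gamma_{\max}]$, recall that each initial interval $[\ell_k^0,u_k^0]=[0,P_T\|\vh_k\|^2/\sigma_C^2]$ has width at most $\Gamma_{\max}$. By induction along any root-to-node path, a coordinate that has been bisected at least once has width $>\delta_{\epsilon}$ (its last split acted on a width $>2\delta_{\epsilon}$), while an unbisected coordinate retains its initial width; thus every box produced by the algorithm has volume strictly greater than $\delta_{\epsilon}^K$ (coordinates with initial width $\le\delta_{\epsilon}$ are never branched and may simply be dropped, which lowers the effective dimension and only sharpens the bound). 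The leaves of the branch-and-bound tree are boxes with pairwise-disjoint interiors whose union lies in $\prod_k[0,\Gamma_{\max}]$, a box of volume at most $\Gamma_{\max}^K$. Summing volumes gives $N_{\mathrm{leaf}}\,\delta_{\epsilon}^K<\Gamma_{\max}^K$, so the number of leaves obeys $N_{\mathrm{leaf}}<(\Gamma_{\max}/\delta_{\epsilon})^K$.

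Finally I would translate the leaf count into an iteration count. Each non-terminating iteration performs exactly one bisection replacing a node by two children, so the tree is binary and the number of executed branchings equals the number of internal nodes, namely $N_{\mathrm{leaf}}-1$; combined with the volume bound this is at most $\lceil(\Gamma_{\max}/\delta_{\epsilon})^K\rceil-1$, and the terminal iteration at which \eqref{eqn:term} finally holds together with the zero-based iteration index supply the additive constant, yielding the stated $T_{\epsilon}$. The main obstacle I anticipate is precisely the step that upgrades the per-branch inequality $u_{k^*}^t-\ell_{k^*}^t>2\delta_{\epsilon}$ into the clean coordinate-wise bound $>\delta_{\epsilon}$ for \emph{every} coordinate of \emph{every} node: one must argue that widths only ever shrink through bisection, handle the never-branched and degenerate $P_T\|\vh_k\|^2/\sigma_C^2\le\delta_{\epsilon}$ coordinates, and keep in mind that the rule \eqref{eqn:k} picks $k^*$ by relaxation gap rather than by width, so it is Lemma \ref{lemma_con} that legitimizes the width lower bound on the coordinate actually being split. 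Once this monotonicity/bookkeeping is pinned down, the volume comparison and the binary-tree counting are routine.
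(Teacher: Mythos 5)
Your proposal is correct and takes essentially the same route as the paper's proof: both rest on the contrapositive of Lemma \ref{lemma_con} (any branched interval has width at least $2\delta_{\epsilon}$, so every box in the tree has per-coordinate width at least $\delta_{\epsilon}$) followed by a volume-packing comparison of the disjoint boxes against $\mathrm{vol}(\mathcal{Q}^0)\leq \Gamma_{\max}^K$, with the definition of $T_{\epsilon}$ closing the argument. The only differences are cosmetic: the paper argues by contradiction (running longer than $T_{\epsilon}$ iterations would force the total volume of the subsets to exceed that of $\mathcal{Q}^0$) rather than by your direct leaf/internal-node count, and it silently assumes every initial interval width exceeds $\delta_{\epsilon}$, the degenerate case you explicitly dispose of by dropping such coordinates.
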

\begin{proof}
See Appendix \ref{proof_theorem_global}.
\end{proof}
 Theorem \ref{theorem_global} shows that
the total number of iterations for our proposed    B\&B algorithm to return an $\epsilon$-optimal solution grows exponentially fast with the total number of users $K.$ The iteration complexity of the proposed  B\&B algorithm  in (\ref{gmax}) seems to be prohibitively high. However, our simulation results  in Section VI show that its practical iteration complexity is actually significantly less than the worst-case bound in Theorem \ref{theorem_global}, thanks to the effective lower bound provided by the relaxation problem \eqref{MER_multi1} based on the McCormick envelope relaxation. In the next section, we shall employ the ML technique to further accelerate the proposed B\&B algorithm.

\section{GNN-based Accelerated B\&B Algorithm}\label{sec_ml_bb}
The search procedure of the proposed B\&B algorithm essentially forms  a series of sequential decision  problems in a tree structure. In particular, at each  iteration of the B\&B algorithm, the pruning policy  makes a decision of pruning or preserving the node. Therefore,
 the node pruning policy plays an essential role in the computational efficiency of the B\&B algorithm in the sense that, if the node pruning policy can safely and quickly prune the corresponding  irrelevant node, (i.e., the node that does not contain the optimal solution), then the problem at the node and all of its children problems do not need to be explored and solved in the B\&B algorithm. The goal of this section is to leverage the ML technique to develop an effective pruning policy which can quickly and safely prune the irrelevant nodes in the B\&B algorithm, thereby significantly accelerating its convergence speed.

 More specifically, we first give the imitation learning framework that is used to accelerate our B\&B algorithm, where the pruning policy is modeled as a binary classifier (i.e., prune or preserve) and learned via imitation learning in Section V-A. Then in Section V-B, we propose a GNN-based node classifier in detail. Finally in Section V-C, we present the GNN-based accelerated B\&B algorithm.
\subsection{An Imitation Learning Framework}
Let  $\phi_s\in \mathbb{R}^P$ represent the mapping from a node $s$ to its feature representation,  where $P$ is the dimension of the feature vector of the node. Denote $\pi_{\theta}: \mathbb{R}^P \rightarrow [0,1]$ as the pruning policy parameterized by $\theta$.  After training the parameter $\theta,$ we can use $\pi_{\theta}(\phi_s)$ to determine whether node $s$ can be pruned. In particular, if $\pi_{\theta}(\phi_s) < 0.5$, then node $s$ is considered irrelevant; otherwise, the node is branched.

\textbf{Labeling data.} The training data $\{\phi_s, y_s\}_{s=1}^T$   are obtained in a batch-by-batch manner with online optimization,  where $y_s$ is the label of  the corresponding feature. In our case, the label $y_s$ can be determined according to the following rule:
\begin{align}\label{eqn_label}
y_s=\left\{
      \begin{array}{ll}
        1, & \hbox{if $\ell_k^s\leq\Gamma_k^{*}\leq u_k^s$ for all ~$k\in[K]$;} \\
        0, & \hbox{otherwise,}
      \end{array}
    \right.
\end{align}
where $\Pi_{k=1}^K[\ell_k^s, u_k^s]$  denotes the feasible region for variable $\{\Gamma_k\}_{k=1}^K$ at node $s$ and $\{\Gamma_k^{*}\}_{k=1}^K$ is the optimal solution of the corresponding instance returned by Algorithm \ref{alg_BB}.

\textbf{The imitation learning framework.} Since the node   generation process  in the  proposed Algorithm \ref{alg_BB} is sequential and   depends on the node pruning policy, it inspires us to adopt the imitation learning approach  to train the parameter $\theta$ \cite{he2014learning}. The imitation learning criterion is given as follows:
\begin{align}\label{IL_equ}
\theta^{i+1} = \arg\min_{\theta}\frac{1}{i}\sum_{t=1}^{i}\frac{1}{|\mathcal{D}_t|}\sum_{(\phi_s,y_s)\in \mathcal{D}_t} \mathcal{L}(\pi_{\theta}(\phi_s),y_s),
\end{align}
where $\mathcal{D}_t$ is the $t$-th batch of training pairs and $\mathcal{L}(\cdot,\cdot)$ is a binary classification loss.  In optimization problem \eqref{IL_equ}, the loss function is minimized iteratively  to improve the pruning
policy by using the aggregated dataset $\cup_{t=1}^i \mathcal{D}_t$. Over time this process drives
 the learned pruning rule $\pi_{\theta}(\cdot)$ to imitate the behaviors of the proposed B\&B Algorithm \ref{alg_BB}.
\subsection{GNN-based Pruning Policy}
In the context of wireless communications, where the number of   communication users may dynamically change, it is desirable to design a neural network that remains agnostic to such variations.   This motives us to propose a GNN-based node pruning policy.

We begin by defining a graph   for each node (in the B\&B algorithm), which sits in the lower right corner in Fig. \ref{BB_framework}, where the vertices represent the antennas and the users and the edges represent the channels connecting them. The input features of the vertices and edges play a crucial role in conveying the essential information.
In particular, for each node, let $\vx_n\in \mathbb{R}^{V_a}$, $n\in [N_t]$, $\vx_{N_t+k}\in \mathbb{R}^{V_u}$, $k\in [K]$, and $\ve_{n,N_t+k}\in \mathbb{R}^{V_e}, n\in [N_t], k\in [K]$
represent the feature vector of antenna $n$, user $k$, and the channel between antenna $n$ and user $k$, respectively.  The specific form for the features of $\vx_n$ and $\ve_{n,N_t+k}$ is given in Appendix \ref{feature_design}.

\begin{figure}[t]
\begin{center}
\includegraphics[width=1\linewidth]{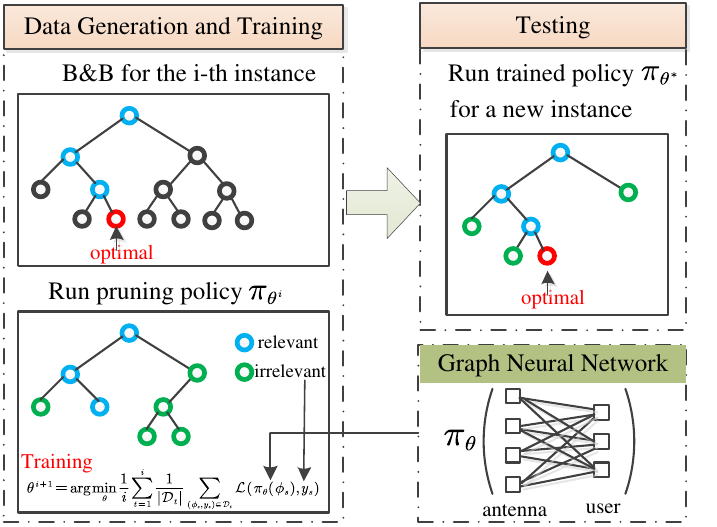}
\caption{A framework of the GNN-based accelerated B\&B algorithm.}
\label{BB_framework}
\end{center}
\vspace{-0.4cm}
\end{figure}
\begin{algorithm}[t]
\begin{algorithmic}[1]
\STATE {\bf Input} Training number $I$, instance number $R$, $D_1=\{\}$.
\FOR{ $i=1,2,\ldots, I$}
\FOR{ $r=1,2,\ldots, R$ (in parallel)}
\STATE Randomly generate  a problem instance $\{\vh_k\}_{k=1}^K$.
\STATE Run Algorithm \ref{alg_BB} to obtain  its optimal solution $\mV^*$.
\STATE Execute lines 3 to 21 in Algorithm \ref{alg_abb}.
\IF{\eqref{eqn_label} holds}
\STATE $D_i\leftarrow D_i\cup\{\phi_{s}, 1\}$,
\ELSE
\STATE $D_i\leftarrow D_i\cup\{\phi_{s}, 0\}$,
\ENDIF
\ENDFOR
\STATE Obtain $\theta^{i+1}$ by solving problem \eqref{IL_equ}.
\ENDFOR
\STATE {\bf Return} $\theta^*=\theta^{I}$.
\end{algorithmic}
\caption{Data Generation and Training}
\label{alg_dgt}
\end{algorithm}
\begin{algorithm}[t]
\begin{algorithmic}[1]
\STATE {\bf Input} Problem instance $\{\vh_k\}_{k=1}^K$, trained pruning
policy $\pi_{\theta}$,  error tolerance $\epsilon>0$.
\STATE {\bf Initialization}  Solve problem MER$(\mathcal{Q}^0)$ in \eqref{MER_multi_re0} to obtain  $\{\mathcal{Q}^0,\mV^0,L^0\}$ and add it into the problem list $\mathcal{P}$. Compute a feasible point $\hat{\mV}^0$ and an upper bound
$U^0=\hat{U}^0$ by \eqref{eqn:feas} and \eqref{eqn:upper_obj}, respectively.
\FOR{ $t=0,1,\ldots$}
\IF{$U^t-L^t<\epsilon$}
\STATE terminate the algorithm, set $U^*=U^t$ and $\mV^*=\bar{\mV}^{t}$.
\ENDIF
\STATE  Choose the node $s$ that has the lowest bound from $\mathcal{P}$ and
delete it from $\mathcal{P}$.
\IF{$\pi_{\theta}(\phi_s)\geq 0.5$}
\STATE Obtain user index $k^*$ by \eqref{eqn:k}.
\STATE Branch $\mathcal{Q}^t$ into two sets $\mathcal{Q}_1^t$ and $\mathcal{Q}_2^t$  in \eqref{eqn_q1} and \eqref{eqn_q2}.
\FOR{ $j=1,2$}
\STATE Solve MER problem \eqref{MER_multi1}  defined over set $\mathcal{Q}_j^t$  to obtain its optimal solution $\mV_j^t$ and lower bound $L_j^t$.
\STATE Compute  $\hat{\mV}_j^t$ and  $\hat{U}_j^t$ by \eqref{eqn:feas} and \eqref{eqn:upper_obj}, respectively.
\STATE Add $\{\mathcal{Q}_j^t,\mV_{j}^t,L_j^t\}$ into the problem list $\mathcal{P}$.
\ENDFOR
\STATE Compute $j^*=\arg\min_{j\in\{1,2\}}\{\hat{U}_j^t\}$.
\IF{$U^t\geq \hat{U}_{j^*}^t$}
\STATE set $U^t=\hat{U}_{j^*}^t$ and $ \bar{\mV}^t = \hat{\mV}_{j^*}^t$.
\ENDIF
\ENDIF
\ENDFOR
\STATE {\bf Return $\mV^*$ and $U^*$.}
\end{algorithmic}
\caption{GNN-based Accelerated B\&B Algorithm}
\label{alg_abb}
\end{algorithm}
In this paper, we use the message passing GNN \cite{gilmer2017neural}, where the aggregation is done as follows:
\begin{align*}
\vq_r^{d}=\xi\left(\mZ_1\vq_r^{d-1}+\sum_{v\in \mathcal{E}_r}\left(\mZ_2\vq_v^d + \mZ_3\ve_{r,v}\right)\right).
\end{align*}
 In the above, $\vq_r^0=\vx_r,  r\in [N_t+K]$; $\mZ_i$ for $i=1,2,3$ are the aggregation parameters that should be learned; $\xi(\cdot)$ represents the activation function of layer $d$; $\mathcal{E}_r$ represents  the neighboring index set of vertex $r$ in the graph. The output of the GNN is
\begin{align*}
\pi_{\theta}(\phi_s)=\frac{1}{M}\sum_{r\in[M]}\zeta(\beta^T\vq_r^D),
\end{align*}
where  $\zeta(\cdot)$ is a sigmoid function and $\phi_s=[\vq_1^D, \vq_2^D, \ldots, \vq_M^D]\in \mathbb{R}^P$  represents the feature vector at the final layer $D$. Finally, all parameters in the GNN to be optimized   are  compactly written as $\theta=[\mZ_1,\mZ_2,\mZ_3,\beta]$.

%\begin{table}[t]
%  \centering
%   \caption{Features design for the GNN model
%}\label{feaures}
%   \renewcommand\arraystretch{2}
%   \tabcolsep=0.5cm
% \begin{tabular}{|c|c|c|}
%   \hline
%   % after \\: \hline or \cline{col1-col2} \cline{col3-col4} ...
%   Antenna's feature & Users' feature & Channel's feature \\ \hline
%   $\textmd{real}(\textmd{eig}(\mR_X^{*}))$ & $\textmd{real}(\textmd{eig}(W_k^*))$, $\textmd{real}(H)$  & $\ell_k^t$,$u_k^t$,$\Gamma_k^t$, $\hat{\Gamma}_k^t$ \\
%   \hline
%   $\textmd{imag}(\textmd{eig}(\mR_X^{*}))$ & $\textmd{imag}(\textmd{eig}(W_k^*))$, $\textmd{imag}(H)$   &$U^*$, $L^*$, $d $  \\
%   \hline
%   $\textmd{abs}(\textmd{eig}(\mR_X^{*}))$ & $\textmd{abs}(\textmd{eig}(W_k^*))$,$\textmd{abs}(H)$  &  a \\
%   \hline
% \end{tabular}
%
%\end{table}
\vspace{-2mm}
\subsection{Proposed GNN-based Accelerated B\&B Algorithm}
We employ an imitation learning framework to train  the parameters in the GNN, as outlined in Algorithm \ref{alg_dgt} and illustrated in the left of Fig. \ref{BB_framework}. The   training algorithm consists of two main steps at each iteration,  i.e.,  data collection step and classifier improvement step,   which sit at the top and bottom of the left of Fig. \ref{BB_framework}, respectively.

More specifically, to generate the dataset $\mathcal{D}_i$, we first run the  vanilla B\&B algorithm  (i.e., Algorithm 1) on $R$ problem instances to find their optimal solutions.  Then we run the B\&B algorithm equipped with the current pruning policy $\pi_{\theta^i}$ to solve the previous  $R$ problem instances and, for each of them, we can  generate a tree  with many nodes as well as the feature vectors associated with the nodes. Finally, we can use (\ref{eqn_label}) to label all of obtained nodes.
After  collecting the  labeled data pairs $\{\phi_{s},y_s\}$,  we can refine the parameters in the GNN-based classifier through retraining with the aggregation dataset $\cup_t^i D_t$ and obtain $\hat{\theta}^{i+1}$ by solving  the optimization problem in \eqref{IL_equ}.

 Finally we may utilize Algorithm \ref{alg_abb} with the well-trained $\pi_{\theta^*}$ to accelerate the vanilla B\&B algorithm for a new problem instance.   This corresponds to the top right corner of Fig. \ref{BB_framework}. The only difference between GNN-based accelerated B\&B Algorithm \ref{alg_abb} and vanilla B\&B  Algorithm \ref{alg_BB} lies in line 8 of Algorithm \ref{alg_abb}, which uses the trained GNN-based classifier to prune the nodes.

\section{Numerical Results}\label{sec_num}
In this section, we present numerical results to  verify the derived theoretical results and to demonstrate the   efficiency of the proposed B\&B algorithm and its GNN-based accelerated version. We   call CVXPY \cite{diamond2016cvxpy}, which interfaces with MOSEK, to solve  all convex subproblems in the form of \eqref{MER_multi1}. We consider an ISAC BS that is equipped with  $N_r=16$ receive antennas and  $N_t=6$ transmit antennas. The frame length is set as $L=16$.    Unless otherwise specified, the power budget is set as $P_T=30$ dBm and the noise variances are set as $\sigma_C^2=\sigma_s^2=1$.
The following two scenarios with different channel distributions are considered.
\begin{itemize}
  \item \textbf{Scenario 1}: Each entry of the channel vectors $\vh_k$ for all $k\in[K]$ follows the i.i.d. complex Gaussian distribution with zero mean and unit variance.
  \item \textbf{Scenario 2}: The channels between the BS and the users experience Rayleigh fading with a path loss of $32.6+36.7\log_{10}(d)$ dB \cite{wang2022noma}, where $d$ is the distance between the BS and the user in meters. The BS is assumed to be located at  the origin and the users are equally spaced between distances of $50~\text{m}$ and $200~\text{m}$ from the BS.
\end{itemize}

\subsection{Verification of Closed-Form Solutions}
To validate the  correctness of the derived  closed-form solutions in Section III-B, we first examine Scenario 1 and Scenario 2 with a single communication user.
The results are presented in Fig. \ref{closed_form_solution} (a), where the numerical results are obtained by solving the optimization problem in \eqref{beamform_multi} with $K=1$ using Algorithm \ref{alg_BB}. Fig. \ref{closed_form_solution} (a) shows that the derived closed-form solutions match well with the numerical solutions obtained by the proposed Algorithm \ref{alg_BB}.
\begin{figure}[t]
\vspace{-5mm}
  \centering
  \subfloat[Single-user case.]
  {
      \label{closed_form}\includegraphics[width=0.45\textwidth]{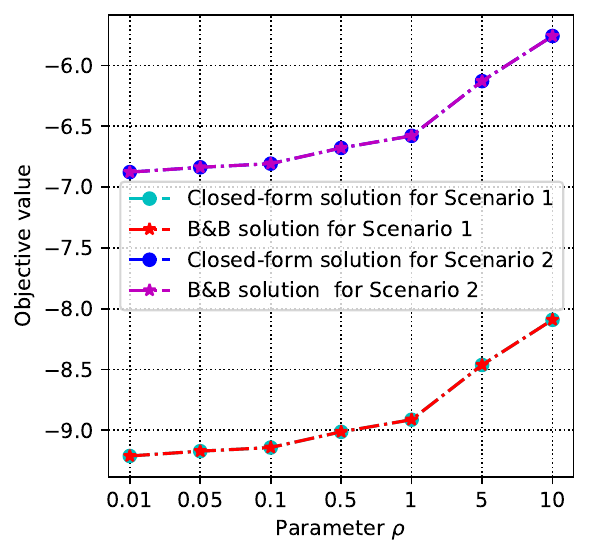}
  }
  \subfloat[Multi-user orthogonal case.]
  {
      \label{closed_form_multi}\includegraphics[width=0.45\textwidth]{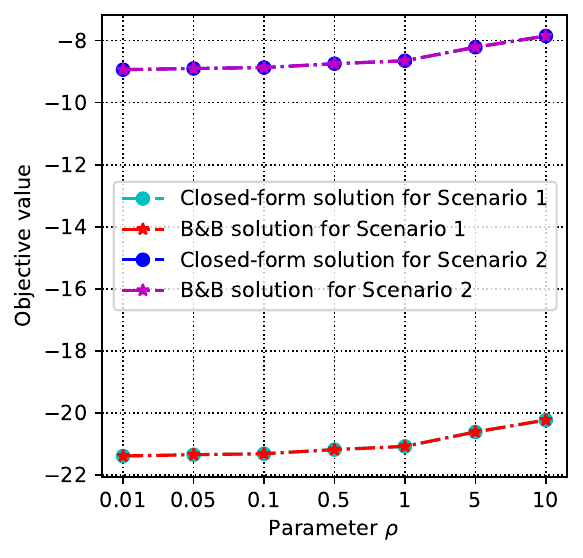}
  }
  \caption{Closed-form and numerical solutions in two special cases.}
  \label{closed_form_solution}
\vspace{-3mm}
\end{figure}

Next, we  consider a multi-user scenario with $K=3$ and orthogonal communication channels. The two curves for each scenario in Fig. \ref{closed_form_solution} (b) are obtained by calling CVXPY to solve convex optimization problem \eqref{multi_user0} and the proposed Algorithm \ref{alg_BB}, respectively.
Fig.  \ref{closed_form_solution} (b) reveals a strong agreement between the solutions obtained from convex optimization and those from the nonconvex problem \eqref{beamform_multi1} using Algorithm \ref{alg_BB}.

\begin{figure}[t]
\vspace{-2mm}
  \centering
  \subfloat[Scenario 1.]
  {
      \label{iteration_eps1}\includegraphics[width=0.45\textwidth]{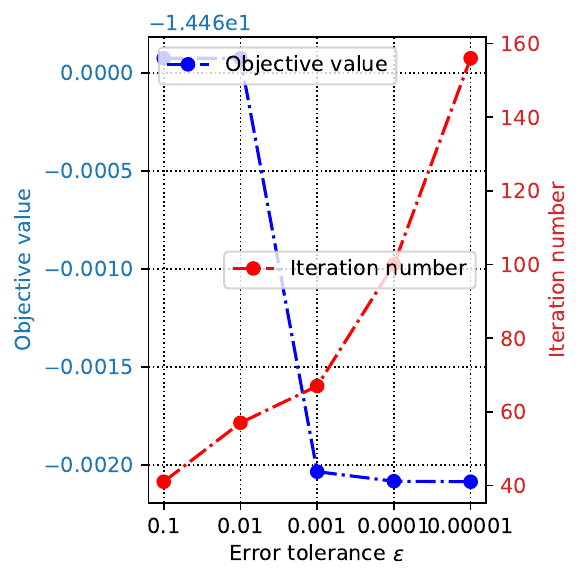}
  }\hspace{-10mm}
  \subfloat[Scenario 2.]
  {
      \label{iteration_eps2}\includegraphics[width=0.45\textwidth]{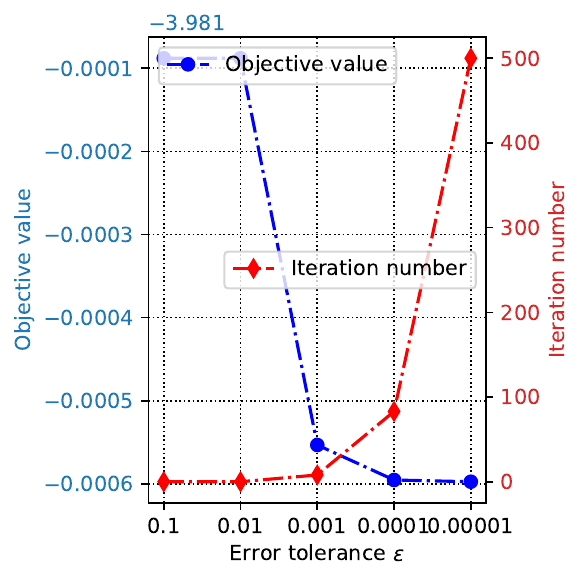}
  }
  \caption{Returned objective values and total iteration numbers
versus the error tolerance $\epsilon$ under two scenarios.}
  \label{iteration_eps}
\vspace{-5mm}
\end{figure}
\subsection{Convergence Behaviors of Proposed Vanilla B\&B Algorithm}
In this subsection,
 we investigate the convergence behaviors of the proposed vanilla B\&B algorithm (i.e., Algorithm \ref{alg_BB}) when applied to solve problem \eqref{beamform_multi1} with $K=3$.
Fig. \ref{iteration_eps} plots the optimal objective  values returned by Algorithm \ref{alg_BB} and the needed total number of iterations versus the error tolerance $\epsilon$.
 On the one hand, it is not surprising to see that the returned objective value  gets better with a decreasing value $\epsilon$.  However, the quality of the returned optimal solution is not sensitive to the value of $\epsilon$ (as the decrease in the returned objective values is marginal).
  On the other hand, the   needed total number of iterations increases   as $\epsilon$ decreases and the increase is drastic when $\epsilon$ is close to zero.
The results in Fig. \ref{iteration_eps} illustrate that the  iteration complexity bound  in \eqref{gmax}
in Theorem \ref{theorem_global} is quite   pessimistic and   the practically needed total iteration number could
be much smaller than   that worst-case bound. It is also clear that   the error tolerance $\epsilon$
provides a tuning knob for achieving various trade-offs between
algorithm's performance and its computational time. In the following simulation, we set $\epsilon=0.001$.

\vspace{-3mm}
\subsection{Comparison with  Existing Local
Optimization Algorithms}
In this subsection, we evaluate the performance of one state-of-the-art algorithm for solving the nonconvex problem \eqref{beamform}, which is called WMMSE-SDR \cite{zhu2023information}. We set $K=3$. In Figs. \ref{sum_rate_user1} and \ref{sum_rate_user2}, we show the trade-off between the communication and sensing performance, i.e., sum rate versus CRB, with
the power budget being set as 20 dBm and 30 dBm, respectively. Total 20 Monte
Carlo realizations  are conducted and the curves in  Figs. \ref{sum_rate_user1} and \ref{sum_rate_user2} are obtained by averaging over them.

\begin{figure}[t]
  \centering
  \subfloat[$P_T=20 $ (dBm).]
  {
      \label{sum_rate_user1_pow20}\includegraphics[width=0.45\textwidth]{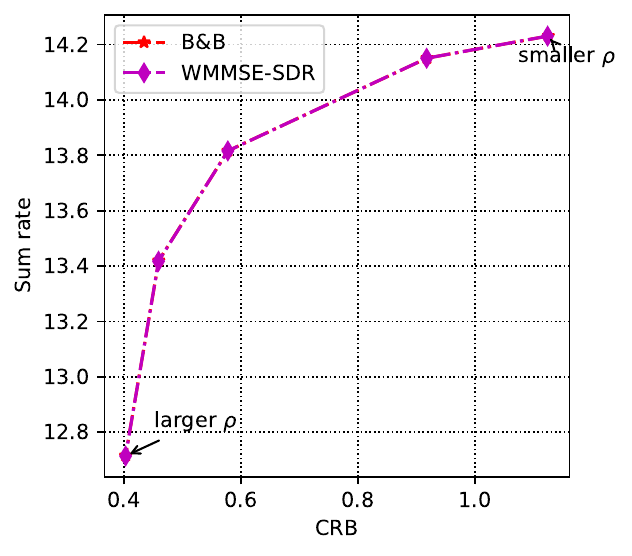}
  }\hspace{-7mm}
  \subfloat[$P_T=30$ (dBm).]
  {
      \label{sum_rate_user1_pow30}\includegraphics[width=0.43\textwidth]{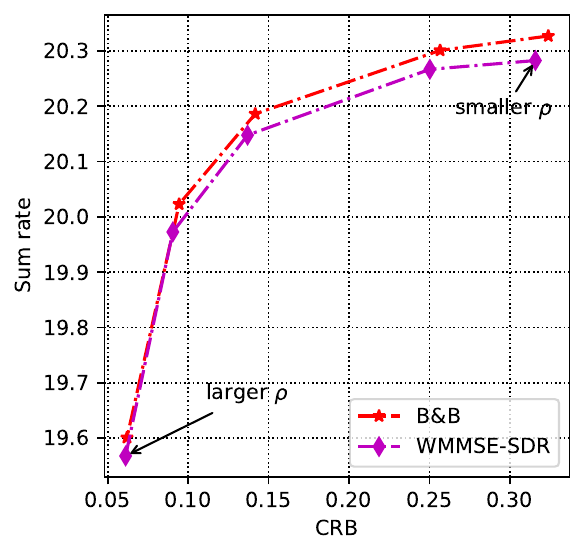}
  }
  \caption{Comparison of proposed vanilla B\&B and WMMSE-SDR algorithms under Scenario 1 with two different  power  budgets.}
 \label{sum_rate_user1}
\vspace{-5mm}
\end{figure}
\begin{figure}[t]
  \centering
  \subfloat[$P_T=20$ (dBm).]
  {
     \label{sum_rate_user2_pow20}\includegraphics[width=0.44\textwidth]{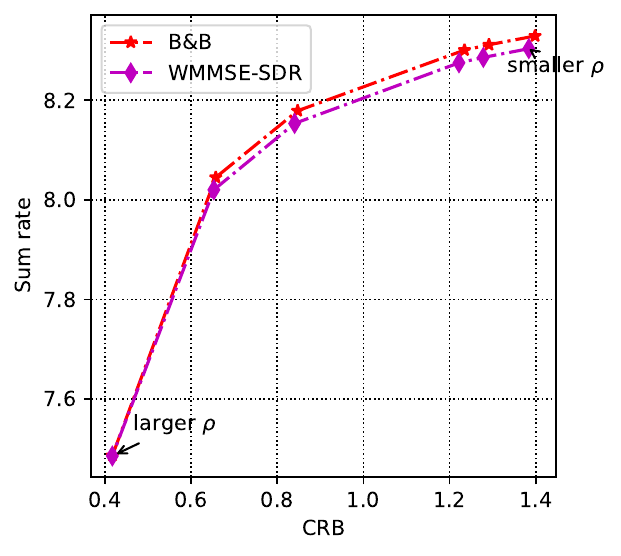}
  }\hspace{-5mm}
  \subfloat[$P_T=30$ (dBm).]
  {
      \label{sum_rate_user2_pow30}\includegraphics[width=0.44\textwidth]{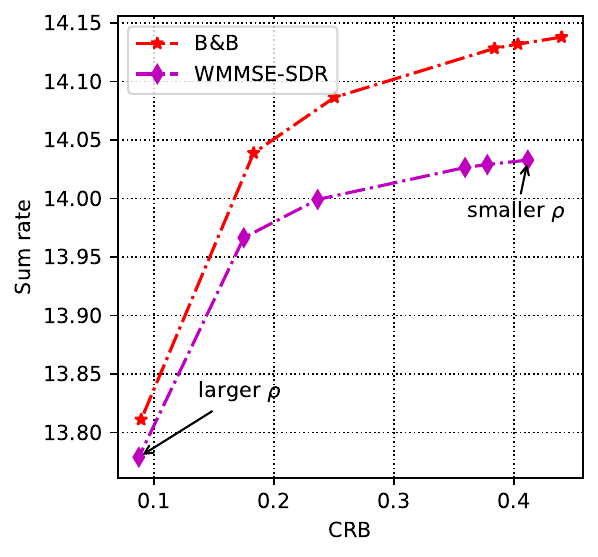}
  }
  \caption{Comparison of proposed vanilla B\&B and WMMSE-SDR algorithms under Scenario 2 with two different power  budgets.}
 \label{sum_rate_user2}
\end{figure}

Figs. \ref{sum_rate_user1} and \ref{sum_rate_user2} show that,   as the power budget increases, the performance gap between WMMSE-SDR algorithm and proposed vanilla B\&B algorithm  becomes larger.
This might be due to the fact that  a large power budget leads to a large feasible set, which might contain  more stationary points. From Figs. \ref{sum_rate_user1}(b) and \ref{sum_rate_user2}(b), we can also observe that proposed vanilla B\&B algorithm performs better than WMMSE-SDR algorithm when   the parameter $\rho$ is smaller.   This is because the parameter $\rho$ affects the nonconvexity of the objective function of problem (\ref{beamform}), i.e., the smaller $\rho$ is, the higher the nonconvexity of the objective function of problem (\ref{beamform}). All of these results show  the performance gain of the proposed
global optimization algorithm over that of the local optimization algorithm.

\subsection{Performance of GNN-based Accelerated B\&B Algorithm}
In this subsection, we demonstrate the efficacy of the proposed GNN-based accelerated B\&B algorithm.
The loss function is
selected to be the binary cross-entropy loss. The Adam algorithm is employed for training the GNN over 20 epochs, with the batch size and the step size being 128 and 0.001, respectively.

To address the class imbalance problem, where the number of relevant nodes is typically much smaller than the number of irrelevant nodes in the training set, we assign a higher weight to the ``positive"  training pairs. Empirically an early pruning of nodes in  the B\&B tree is   not preferred due to   an increased risk of pruning the nodes containing the global solution.
Hence, following \cite{shrestha2023optimal}, we weight each term $\mathcal{L}(\pi_{\theta}(\phi_s),y_s)$  using
\begin{align*}
\left\{
  \begin{array}{ll}
    \frac{1}{d}, & \hbox{if $y_s=0$;} \\[1mm]
    \frac{1+q}{d}, & \hbox{if $y_s=1$,}
  \end{array}
\right.
\end{align*}
 where $d$ is the depth of node $s$ and $q\in \mathbb{R}$ offsets the imbalance ratio. In all experiments, we set $q = 11$.
\begin{figure}[t]
\begin{center}
\includegraphics[width=0.6\linewidth]{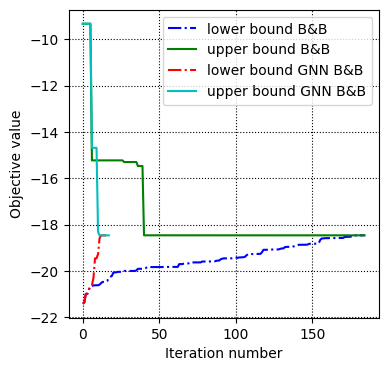}
\caption{Comparison of the lower and upper bounds in vanilla B\&B and GNN-based accelerated B\&B algorithms.}
\label{ml_BB}
\end{center}
\end{figure}
\begin{table}[t]
  \centering
\vspace{-2mm}
   \caption{Performance of GNN-based accelerated B\&B algorithm.
}\label{table1}
   \renewcommand\arraystretch{2}
 \begin{tabular}{|c|c|c|c|c|}
   \hline
   % after \\: \hline or \cline{col1-col2} \cline{col3-col4} ...
   \multirow{2}{*} {$(K,N_t)$}&
   \multicolumn{2}{c|}{Scenario 1} &\multicolumn{2}{c|}{Scenario 2} \\
   \cline{2-5} &  Ogap & speedup & Ogap & speedup\\
   \hline
   $(3,6)$  & 0.01&5.68 &0.004 &6.87 \\
  \hline
   $(3,8)$  & 0.04&3.28 &0.0001 &4.20 \\
  \hline
   $(3,10)$  & 0.05&3.33 &0.001 & 5.24 \\
  \hline
   \end{tabular}
\end{table}

  We define two metrics to compare the vanilla B\&B and GNN-based accelerated B\&B algorithms. The first one is
 the optimality gap (Ogap) as follows:
\begin{align*}
\textmd{Ogap} := \frac{\hat{U}_{\text{GNN}}-U^*}{|U^*|} \times 100\%,
\end{align*}
where $U^*$ is the optimal objective value  returned by the vanilla B\&B algorithm and $\hat{U}_{\text{GNN}}$ is the   optimal objective value returned by GNN-based accelerated B\&B algorithm. The second metric is the running time speedup:
\begin{align*}
\textmd{speedup} := \frac{\textmd{Running time of vanilla B\&B (seconds)}}{\textmd{Running time of  accelerated B\&B (seconds)}}.
\end{align*}

Fig. \ref{ml_BB} plots the convergence behaviors of lower and upper bounds of proposed vanilla and
 GNN-based accelerated B\&B algorithms when $K=3$ under Scenario 1. It can be seen from Fig. \ref{ml_BB} that both lower and upper bounds of the vanilla B\&B algorithm converge to the optimal objective value, which is consistent with the global optimality guarantee in Theorem \ref{theorem_global}.  More importantly, GNN-based accelerated B\&B algorithm converges significantly faster than the vanilla B\&B algorithm, which shows that the learned GNN-based classifier indeed safely and quickly prunes many irrelevant nodes.

 Table \ref{table1} showcases the performance of the proposed   GNN-based accelerated  B\&B algorithm for various problem sizes. The results are averaged over 20 random test instances.  It can be observed from Table \ref{table1} that the GNN-based accelerated B\&B algorithm can achieve a 3-7 times speedup over the vanilla B\&B algorithm without sacrificing the solution quality. In particular, the (average) optimality gap between the two algorithms are less than 0.05\%.

%\begin{table}[t]
%  \centering
%   \caption{Compare the results of different accelerated ML methods
%}\label{table1}
%   \renewcommand\arraystretch{2}
% \begin{tabular}{|c|c|c|c|c|c|}
%   \hline
%   % after \\: \hline or \cline{col1-col2} \cline{col3-col4} ...
%   \multirow{2}{*} {$(K,N_t)$}& \multirow{2}{*} {Metrics}&
%   \multicolumn{2}{c|}{Scenario 1} &\multicolumn{2}{c|}{Scenario 2} \\
%   \cline{3-6} &~&  GNN & MLP & GNN & MLP\\
%   \hline
%   $(3,6)$ & \thead{Ogap\\ speedup}  & \thead{$4*10^{-7}$\\ 5.97} &\thead{$0.024$\\ 5.56} &\thead{$5*10^{-5}$\\ 4.01} & \thead{$0.01$\\ 4.24}\\
%  \hline
%   $(3,8)$ & \thead{Ogap\\ speedup}  & \thead{$1*10^{-6}$\\ 2.59} & \thead{$0$\\ 1}&\thead{$3*10^{-4}$\\3.29} &\thead{$9*10^{-5}$\\ 3.16} \\
%  \hline
%   $(3,10)$ & \thead{Ogap\\ speedup}  & \thead{$1*10^{-7}$\\ 2.88} &\thead{$0$\\ 1} &\thead{$1*10^{-5}$\\ 4.33} &\thead{$1*10^{-5}$\\ 4.24} \\
%     \hline
%   $(4,6)$ & \thead{Ogap\\ speedup}  & \thead{$3*10^{-7}$\\ $\mathbf{17.31}$} &\thead{$0.09$\\ 22.64} &\thead{$0.002$\\ 16.84} & \thead{$0.0009$\\ 11.68}\\
%  \hline
%   $(4,8)$ & \thead{Ogap\\ speedup}  & \thead{$0.04$\\ $\mathbf{15.66}$} &\thead{$0.05$\\ 17.66} &\thead{$0.003$\\ 10.77} &\thead{$0.0003$\\ 15.29} \\
%  \hline
%   $(4,10)$ & \thead{Ogap\\ speedup}  & \thead{$4*10^{-6}$\\ $\mathbf{9.62}$} &\thead{$0$\\ 1} &\thead{$0.0003$\\ 18.005} & \thead{$0.0005$\\ 18.83}\\
%   \hline
% \end{tabular}
%
%\end{table}

\section{Conclusion}\label{sec_con}
In this paper, we have proposed an optimization model for transmit beamforming design in MIMO ISAC systems. More specifically, we have formulated the beamforming design problem as the maximization of a weighted sum of the sum rate of all communication users and the CRB of the extended target subject to the power budget constraint at the BS. The focus of this paper is to obtain   the global solution for the formulated nonconvex problem. We have firstly derived an optimal closed-form solution to the
formulated problem in two special cases. Furthermore, we have proposed vanilla B\&B algorithm and GNN-based accelerated B\&B algorithm for solving the formulated problem in the general multi-user scenario. While the vanilla B\&B algorithm is guaranteed to find the global solution, its GNN-based accelerated version is able to significantly improve its computational efficiency by embedding a learned pruning policy   in the vanilla B\&B algorithm. Numerical results have shown the correctness of the derived theoretical results as well as the efficiency of proposed B\&B algorithms.
In the future, we   plan to consider the beamformer design for   ISAC systems with multiple BSs.

		\vspace{-0.0cm}
		\appendices

\section{Proof of Proposition \ref{theorem_rank} }\label{proof_rank}
Since
 the objective function of problem \eqref{beamform_multi1} only depends on $\mR_X$ and $\{\Gamma_k\}_{k=1}^K$ and $ \tilde{\mR}_X= \bar{\mR}_X, \tilde{\Gamma}_k = \bar{\Gamma}_k$ for all $k\in[K]$, we only need to show that the constructed solutions $\bar{\mR}_X, \{\bar{\Gamma}_k\}_{k=1}^K, \{\bar{\mW}_k\}_{k=1}^K$ in (\ref{rrx}) is feasible to problem \eqref{beamform_multi1}.

 First,  from $\mQ_k=\vh_k\vh_k^H$ for all $k\in[K]$, one can obtain
    \begin{align*}
 \tr(\mQ_k\bar{\mW}_k\mQ_k\mW_k^H)&=\tr(\vh_k^H\bar{\mW}_k\vh_k^H\vh_k\bar{\mW}_k^H\vh_k)\\
 &=(\vh_k^H\bar{\mW}_k\vh_k)^2=(\tr(\mQ_k\bar{\mW}_k))^2.
 \end{align*}
 Using it and substituting $\{\tilde{\Gamma}_k\}_{k=1}^K,\{\tilde{\mW}_k\}_{k=1}^K,\tilde{\mR}_X$ into \eqref{beamform_multi_cons13}, we can get
 \begin{align*}
 &(1-\tilde{\Gamma}_k)\tr(\mQ_k\tilde{\mW}_k)-\tr(\mQ_k\tilde{\mR}_X)\\
 =&~(1-\bar{\Gamma}_k)\tr(\mQ_k\bar{\mW}_k)-\tr(\mQ_k\bar{\mR}_X)\geq \tilde{\Gamma}_k\sigma_C^2,
  \end{align*}
which shows that constraint in \eqref{beamform_multi_cons13} holds with $\{\tilde{\Gamma}_k\}_{k=1}^K,\{\tilde{\mW}_k\}_{k=1}^K,\tilde{\mR}_X$.

Second, we show that $\bar{\mW}_k-\tilde{\mW}_k\succeq 0$ for all $k\in[K]$. For any vector $\vv$, it holds that
\begin{align*}
\vv^H(\bar{\mW}_k-\tilde{\mW}_k)\vv=\vv^H\bar{\mW}_k\vv-(\vh_k^H\bar{\mW}_k\vh_k)^{-1}|\vv^H\bar{\mW}_k\vh_k^H|^2.
\end{align*}
Using the Cauchy-Schwarz inequality, we have
\begin{align*}
(\vh_k^H\bar{\mW}_k\vh_k)(\vv^H\bar{\mW}_k\vv)\geq |\vv^H\bar{\mW}_k\vh_k|^2,
\end{align*}
{which implies} $\bar{\mW}_k-\tilde{\mW}_k\succeq 0$. As such, we get
\begin{align*}
 \tilde{\mR}_X-\sum_{k\in[K]}\tilde{\mW}_k =  \bar{\mR}_X-\sum_{k\in[K]}\bar{\mW}_k+\sum_{k\in[K]}(\bar{\mW}_k-\tilde{\mW}_k) \succeq 0,
\end{align*}
namely, constraint  \eqref{beamform_multi_cons12}  holds with $\tilde{\mR}_X, \{\tilde{\Gamma}_k\}_{k=1}^K, \{\tilde{\mW}_k\}_{k=1}^K$.
\section{Proof of Theorem \ref{theorem_closed}}\label{proof_theorem2}
First, given the optimal $\mR_X$ and $\Gamma_1$ of problem (\ref{single_user0}), we can construct its optimal $\mW_1$ as follows:
\begin{align}\label{W_1}
\mW_1 = (\vu_1^H\mR_X\vu_1)\vu_1\vu_1^H,
\end{align}
where $\vu_1 = {\vh_1}/{\|\vh_1\|}$. The reason is as follows.
By \eqref{W_1}, we have
\begin{align}\label{qw1}
\nonumber\tr(\mQ_1\mW_1)&=(\vu_1^H\mR_X\vu_1)\tr(\vh_1\vh_1^H\vu_1\vu_1^H)\\
&=\tr(\mQ_1\mR_X)
\end{align}
and thus $\tr\left(\mQ_1(\mR_X-\mW_1)\right)=0$.

Note that $\mR_X=\mW_1+\mW_2$.  Substituting  this into \eqref{qw1}, we have
\begin{align*}
\tr\left(\mQ_1\mW_2\right) = \vh_1^H\mW_2\vh_1 =0.
\end{align*}
Then it is simple to check that $\mW_2\vu_1 =0$  and  $\vu_1$ is an eigenvector of the
optimal $\mR_X$. Without loss of generality, let the eigenvalue decomposition of the
optimal $\mR_X$ be
\begin{align}\label{eig_RX}
\mR_X = \sum_{i\in[N_t]}\lambda_{i}\vu_i\vu_i^H,
\end{align}
where $\lambda_{i}$ is  the $i$-th eigenvalue and  $\vu_i$ is the corresponding eigenvector.
%Note that $\mR_X=\mW_1+\mW_2$ and $\tr\left(\mQ_1(\mR_X-\mW_1)\right)=0$, {\red which implies that $\tr(\mQ_1\mW_2)=0$. According to \eqref{W_1}, it leads to that
% the optimal $\mW_2$ is orthogonal to $\mW_1$.} Then we have
%$\lambda_{1}=\vu_1^H\mR_X\vu_1$ and $\vu_1 = {\vh_1}/{\|\vh_1\|}$.
%Furthermore, we can obtain the following lemma.
%\begin{lemma}
%Let $\lambda_{11}=\vu_1^H\mR_X\vu_1$ and $\vu_1 = \frac{\vh_1}{\|\vh_1\|}$,
%
%where  In addition, .
%\end{lemma}
%\begin{proof}
%  See \cite{liu2021cramer}.
%\end{proof}
Substituting  \eqref{W_1} and \eqref{eig_RX} into problem \eqref{single_user0}, we obtain
\begin{subequations}\label{single_user}
\begin{align}
\label{single_user1}\min_{\substack{\{\lambda_{i}\}_{i=1}^{N_t},\Gamma_1}}&~ -\log\left(1+\Gamma_1\right)+ \rho \sum_{i\in[N_t]}\lambda_{i}^{-1} \\
 \textmd{s.t.} ~~~& ~ ~\lambda_{1}\|\vh_1\|^2\geq \Gamma_1\sigma_C^2,\\
 &~\sum_{i\in[N_t]}\lambda_{i}\leq P_T, ~ \lambda_{i}> 0,~ i \in [N_t].
\end{align}
\end{subequations}
Obviously, problem \eqref{single_user} is a convex problem.  Next, we   derive the optimal solution of problem \eqref{single_user}.

The Lagrangian function of problem \eqref{single_user} is
 \begin{align}
 \nonumber \mathcal{L}=&-\log\left(1+\Gamma_1\right)+ \rho \sum_{i\in[N_t]}\lambda_{i}^{-1}+\omega\left(-\lambda_{1}+\frac{\Gamma_1\sigma_C^2}{\|\vh_1\|^2}\right)\\
 &+\mu\Big(\sum_{i\in[N_t]}\lambda_{i}-P_T\Big)-\sum_{i\in[N_t]}\eta_i\lambda_{i},
 \end{align}
 where $\omega$, $\mu$, and $\left\{\eta_i\right\}_{i=1}^{N_t}$ are the Lagrange multipliers associated with the inequality constraints in problem \eqref{single_user}. Then, the
KKT conditions of problem (\ref{single_user}) can be given as follows:
\begin{subequations}\label{KKT_trace}
\begin{align}
\label{KKT_trace1}\frac{\partial\mathcal{L}}{\partial \lambda_{1}} &= -\rho\lambda_{1}^{-2}-\omega+\mu-\eta_1 = 0,\\
\label{KKT_trace2}\frac{\partial\mathcal{L}}{\partial \lambda_{i}} &= -\rho\lambda_{i}^{-2}+\mu-\eta_i = 0, ~i=2,3,\ldots,N_t,\\
\label{KKT_trace3}\frac{\partial\mathcal{L}}{\partial \Gamma_1} &= -\frac{1}{1+\Gamma_1}+\frac{\omega\sigma_C^2}{\|\vh_1\|^2} = 0,\\
\label{KKT_trace4}\omega&\left(-\lambda_{1}+\frac{\Gamma_1\sigma_C^2}{\|\vh_1\|^2}\right)=0, ~\omega\geq0,~
\lambda_{1}\geq\frac{\Gamma_1\sigma_C^2}{\|\vh_1\|^2},\\
\mu&\left(\sum_{i=1}^{N_t}\lambda_{i}-P_T\right)=0, ~\mu \geq 0,~ \sum_{i=1}^{N_t}\lambda_{i}\leq P_T,\\
\eta_i& \lambda_{i} = 0, ~\eta_i\geq 0, ~\lambda_{i}> 0,~ i\in[N_t].
\end{align}
\end{subequations}
Since $\lambda_{i}> 0$ for all $i\in[N_t]$, we have $\eta_i=0$ for all $i\in[N_t]$. Hence, \eqref{KKT_trace1}--\eqref{KKT_trace3}
can be  simplified into
\begin{subequations}\label{KKT_trace_new}
\begin{align}
\label{KKT_trace_new1}\rho\lambda_{1}^{-2} &= \mu-\omega,\\
\label{KKT_trace_new2}\rho\lambda_{i}^{-2} &=\mu,\\
\label{KKT_trace_new3} \frac{1}{1+\Gamma_1} &= \frac{\omega\sigma_C^2}{\|\vh_1\|^2}.
\end{align}
\end{subequations}
From \eqref{KKT_trace_new2}, we must have $\mu>0$, which implies the   inequality power budget constraint must hold with equality, i.e., $ \sum_{i=1}^{N_t}\lambda_{i}= P_T$. In addition,  it follows from
\eqref{KKT_trace_new3} that $\omega>0$.   Furthermore, it follows from the positiveness of $\omega$ and the first equation in (\ref{KKT_trace4}) that $\lambda_{1}={\Gamma_1\sigma_C^2}/{\|\vh_1\|^2}$,  which, together with the equality power budget constraint, gives
$\sum_{i=2}^{N_t}\lambda_{i}=P_T-{\Gamma_1\sigma_C^2}/{\|\vh_1\|^2}$. Using \eqref{KKT_trace_new2}, we obtain
\begin{align}\label{lambda_ii}
\lambda_{i} = \frac{P_T\|\vh_1\|^2-\Gamma_1\sigma_C^2}{\|\vh_1\|^2(N_t-1)}, ~i=2,3,\ldots,N_t.
\end{align}
From \eqref{KKT_trace_new1}--\eqref{KKT_trace_new2},   we can further obtain
\begin{align}
\omega = \rho(\lambda_{i}^{-2}-\lambda_{1}^{-2}).
\end{align}
Substituting \eqref{lambda_ii} into the above equation and using \eqref{KKT_trace_new3}, we   get \eqref{gamma_equation}.
%\begin{align}
%\frac{\|\vh_1\|^2}{1+\Gamma_1}  = \rho\sigma_C^2\left(
%\frac{(\|\vh_1\|^2(N_t-1))^2}{(P_T\|\vh_1\|^2-\Gamma_1\sigma_C^2)^2}-
%\frac{\|\vh_1\|^4}{\Gamma_1^2\sigma_C^4}\right).
%\end{align}

%Assume that the eigenvalue decomposition of $\mW_{K+1}$ is $\mW_{K+1}=\bar{\mU}\bar{\Lambda}\bar{\mU}^H$,   Then $\mR_X$ can be equivalently written as
%
%The orthogonality between $\mW_k, k\in[K]$ and $\mW_{K+1}$ yields
%\begin{align}\label{eqn:uW}
%\vu_k^H\mW_{K+1}\vu_k&=\vu_k^H\bar{\mU}\bar{\Lambda}\bar{\mU}^H\vu_k=0\Rightarrow \bar{\mU}^H\vu_k=0.
%\end{align}
%Substituting  \eqref{eqn:uW} into \eqref{form_Rx} and using the orthogonal between $\vu_k$ and $\vu_j$, $j\neq k$, we have
%\begin{align*}
%\mR_X\vu_k = \lambda_{k}\vu_k\vu_k^H\vu_k=\lambda_{k}\vu_k,~k\in[K],
%\end{align*}
%{\blue which indicates that $\vu_k$ is one of the eigenvectors of the optimal $\mR_X$.}
% The proof has been completed.
\section{Proof of Lemma \ref{multi_lemma}}\label{proof_lemma_mut}
Given the optimal $\mR_X$ and $\{\Gamma_k\}_{k=1}^K,$   we can construct the optimal $\mW_k$ as follows:
\begin{align}\label{W_k}
\mW_k = (\vu_k^H\mR_X\vu_k)\vu_k\vu_k^H,
\end{align}
where $\vu_k = {\vh_k}/{\|\vh_k\|}$ for all $k\in[K]$.  This is because
  \begin{align*}
  &\tr(\mQ_k(\mR_X-\mW_k))\\
  =&\tr(\mQ_k\mR_X)-(\vu_k^H\mR_X\vu_k)\tr(\mQ_k\vu_k\vu_k^H)=0, ~k\in[K].
  \end{align*}
  Now we study the optimal solution of $\mR_X.$
Substituting $\mR_X=\sum_{k=1}^{K+1}\mW_k$ into \eqref{inter_Qk}, we get
\begin{align}\label{mqw}
\sum_{j\neq k}^{K}\tr(\mQ_k\mW_j)+\tr(\mQ_k\mW_{K+1})=0.
\end{align}
Since  $\vh_1,\vh_2,\ldots,\vh_K$ are orthogonal to each other,   it follows that
\begin{align*}
\tr(\mQ_k\mW_j) &= \vh_k^H\mW_j\vh_k=0,~ j\neq k, ~j,~k\in[K].
\end{align*}
Substituting the above equation into \eqref{mqw}, we have
\begin{align*}
\tr(\mQ_k\mW_{K+1}) &=\vh_k^H\mW_{K+1}\vh_k=0,~k\in[K].
\end{align*}
  Then it is simple to obtain
\begin{align*}
\mR_X\vu_k=\mW_k\vu_k=(\vu_k^H\mR_X\vu_k)\vu_k,~k\in[K],
\end{align*}
which   implies that $\lambda_{k}=\vu_k^H\mR_X\vu_k$ is one of the eigenvalue  of the optimal $\mR_X$ and  $\vu_k$ is   the corresponding  eigenvector for all $k\in[K]$.
\section{Proof of Lemma \ref{lemma_con}}\label{proof_lemma_con}
  From line 17 in Algorithm \ref{alg_BB}, we have $U^t\leq \hat{U}^t$,  which gives rise to
\begin{align}\label{eqn:UL}
U^t-L^t&\leq \hat{U}^t-L^t= \sum_{k\in [K]}\log\left(1+\frac{\Gamma_k^t-\hat{\Gamma}_k^t}{1+\hat{\Gamma}_k^t}\right),
\end{align}
where the last equality is due to \eqref{eqn:upper_obj} and
 the fact
\begin{align*}
 L^t =-\sum_{k\in[K]}\log(1+{\Gamma}_k^t)+ \rho \tr((\mR_X^t)^{-1}).
\end{align*}
%From the definition of $\hat{\Gamma}_k^t$ in \eqref{eqn:feas}, we have
%\begin{align*}
%1+\hat{\Gamma}_k^t\geq 1+l_k^t.
%\end{align*}
% Substituting the above inequality into \eqref{eqn:UL}, we know
% \begin{align}\nonumber
%U^t-L^t&\leq  \sum_{k=1}^{K}\log\left(1+\frac{\Gamma_k^t-l_k^t}{1+l_k^t}\right)\\
%\label{eqn:UL1}&\leq \sum_{k=1}^{K}\log\left(1+\Gamma_k^t-l_k^t\right)
%\end{align}
%where the last inequality dues to $l_k^t\geq 0$.
As we  use \eqref{eqn:k}  to obtain the  index   $k^*$,   we further have
 \begin{align*}
 U^t-L^t\leq K\log\left(1+\frac{\Gamma_{k^*}^t-\hat{\Gamma}_{k^*}^t}{1+\hat{\Gamma}_{k^*}^t}\right).
\end{align*}
Since $0\leq \ell_{k^*}^t\leq\hat{\Gamma}_{k^*}^t\leq u_{k^*}^t$ and $\Gamma_{k^*}^t\leq u_{k^*}^t$, it follows that
 \begin{align}\label{eqn:UL2}
\nonumber U^t-L^t&\leq K\log\left(1+u_{k^*}^t-\ell_{k^*}^t\right)\\
&\leq K\log\left(1+2\delta_{\epsilon}\right).
\end{align}
This, together with the definition of $\delta_{\epsilon}$ in \eqref{eqn:delta}, shows the desirable result \eqref{eqn:term}.

\section{Proof of Theorem \ref{theorem_global}}\label{proof_theorem_global}
As shown in Lemma \ref{lemma_con}, when \eqref{lemma_cond} is satisfied, the proposed Algorithm \ref{alg_BB} will terminate and return an $\epsilon$-optimal solution. In this part, we show  that Algorithm \ref{alg_BB} will terminate within   $T_{\epsilon}$ iterations based on a contradiction principle,  where $T_{\epsilon}$ is given in \eqref{gmax}.

Suppose that the algorithm does not terminate within $T_{\epsilon}$ iterations.  From Lemma \ref{lemma_con}, we know that condition \eqref{lemma_cond} does not hold, i.e., $u_{k^*}^t-\ell_{k^*}^t\geq 2\delta_{\epsilon}$ for all $t\in[T_{\epsilon}].$   According to line 9 of Algorithm \ref{alg_BB}, the width of the two sub-intervals $[\ell_{k^*}^t,z_{k^*}^t]$ and $[z_{k^*}^t,u_{k^*}^t]$   after the partition is greater than $\delta_{\epsilon}$. Then, for each subset $\mathcal{Q}^t:=\prod_{k=1}^K[\ell_k^t,u_k^t]$   obtained from the partition of  the original set $\mathcal{Q}^0$,  there   must hold $u_{k}^t-\ell_{k}^t\geq \delta_{\epsilon}$ for all $k\in[K]$.
As a result,  the volume of each subset $\mathcal{Q}^t$ is not less than $\delta_{\epsilon}^{K}$ and the total volume of all $T_{\epsilon}$ subsets is not less than $T_{\epsilon}\delta_{\epsilon}^{K}$. In addition,  the volume of $\mathcal{Q}^0$ is less than $\Gamma_{\max}^K$. By the definition of $T_{\epsilon}$, we get $$T_{\epsilon}\delta_{\epsilon}^{K}>\Gamma_{\max}^K,$$ which implies that the total volume of all $T_{\epsilon}$ subsets is greater than that of the original set $\mathcal{Q}^0$. This is a contradiction. Therefore, the algorithm will terminate within at most $T_{\epsilon}$ iterations.
\section{Input Feature Design}\label{feature_design}
In this appendix, we specify the detailed
 features $\{\vx_n\in \mathbb{R}^1\mid n\in [N_t]\}$, $\{\vx_{N_t+k}\in\mathbb{R}^{13}\mid k\in [K]\}$, and $\{\ve_{n,N_t+k}\in\mathbb{R}^{4}\mid n\in[N_t], k\in[K]\}$ used in the GNN in Section V-B.
  \begin{itemize}
    \item The feature at the antenna node: we compute   all eigenvalues of matrix $\mR_X^t$ and let $\vx_n$ represent the $n$-th  eigenvalue of matrix $\mR_X^t;$
    \item The feature at the user node: the length of the feature vector for each user node is 13 and each element is given as follows:
 \begin{align*}
&\vx_{N_t+k}(1) = \ell^t, ~\vx_{N_t+k}(2) =u^t, ~\vx_{N_t+k}(3) =\hat{\Gamma}^t,\\
 &\vx_{N_t+k}(4) =\Gamma^t, ~\vx_{N_t+k}(5) =U^t,~ \vx_{N_t+k}(6) =L^t,\\
 &\vx_{N_t+k}(7) = \mathbb{I}(\hat{U}^{t}-U^t\leq \epsilon), ~\vx_{N_t+k}(8) = d, \\
 & \vx_{N_t+k}(9) = \tr(\mQ_k\mW_k^t),\\
  &\vx_{N_t+k}(10)  = \tr(\mQ_k(\mR_X^t-\mW_k^t)),\\
 &  \vx_{N_t+k}(11) =\bar{L}^t,  ~ \vx_{N_t+k}(12)= \hat{U}^t, ~\vx_{N_t+k}(13)= \bar{\Gamma}^t,
 \end{align*}
where $\mathbb{I}(\cdot)$ denotes the indicator function;
    \item The feature at the edge: the length of the feature
vector for each edge is 4 and each element is given
as follows:
 \begin{align*}
\ve_{n,N_t+k}(1) &= \textmd{Re}(\mH_{n,jk}),~ \ve_{n,N_t+k}(2) = \textmd{Im}(\mH_{n,k}), \\
\ve_{n,N_t+k}(3) &= |\mH_{n,k}|,
 \end{align*}
where  $|\cdot|$ denotes the modulus of a complex value.
 We also compute the eigenvalue of matrix $\mW_k^t$ for all $k\in[K]$ and let $\ve_{n,N_t+k}(4)$ represent the $n$-th  eigenvalue of matrix $\mW_k^t$.
  \end{itemize}

%Denote  $A:= [\textmd{Re}(\vx_{\textmd{eig}}), \textmd{Im}(\vx_{\textmd{eig}}),  |\vx_{\textmd{eig}}|]\in \mathbb{R}^{N_t\times 3}$, then $\vx_{i}$ represents the $i$-th row of matrix $A$,
%%\begin{align*}
%%\vx_{i}= [\textmd{Re}(\vx_{\textmd{eig}}), \textmd{Im}(\vx_{\textmd{eig}}),  |\vx_{\textmd{eig}}|]_{i,\cdot}
%%\end{align*}
%

%  \begin{align*}
%\ve_{i,N_t+j}(4) &= [\textmd{Re}(\vx_{\textmd{eig}}(1)), \textmd{Re}(\vx_{\textmd{eig}}(2)), \ldots, \textmd{Re}(\vx_{\textmd{eig}}(K))]_{i,j}\\
%\ve_{i,N_t+j}(5) &= [\textmd{Im}(\vx_{\textmd{eig}}(1)), \textmd{Im}(\vx_{\textmd{eig}}(2)), \ldots, \textmd{Im}(\vx_{\textmd{eig}}(K))]_{i,j}\\
%\ve_{i,N_t+j}(6) &= [|\vx_{\textmd{eig}}(1)|, |\vx_{\textmd{eig}}(2)|, \ldots, |\vx_{\textmd{eig}}(K)|]_{i,j}.
% \end{align*}

\end{document}